\def\titlename{\huge Fourier algebras of hypergroups and central algebras on compact (quantum) groups}
\title{\titlename}
\def\authname{Mahmood Alaghmandan  and Jason Crann}
\author{{\normalsize\sc \authname}}
\definecolor{blue1}{RGB}{32,78,170}
\definecolor{blue2}{RGB}{93,92,160}
\definecolor{blue3}{RGB}{40,51,202}
\definecolor{blue4}{RGB}{0,0,0}
\definecolor{purple1}{RGB}{128,0,128}
\definecolor{El}{rgb}{.4,.9,1}
\normalfont\fontsize{12}{15}\bfseries}{\thesection}{1em}{}
\titleformat{\chapter}[display]
  {\normalfont\sffamily\huge\bfseries\color{blue4}}
  {\chaptertitlename\ \thechapter}{20pt}{\Huge}
\Large\color{blue4}}
\large\color{blue4}}
\newcommand{\ma}[1]{\emph{#1}}
            \newcounter{pulse}[section]
\numberwithin{pulse}{section}
\numberwithin{equation}{section}
\newtheorem{theorem}[pulse]{\bf \textsf{Theorem}}
\newtheorem{proposition}[pulse]{\bf \textsf{Proposition}}
\newtheorem{lemma}[pulse]{\bf \textsf{Lemma}}
\newtheorem{thm}[pulse]{\bf \textsf{Theorem}}
\newtheorem{prop}[pulse]{\bf \textsf{Proposition}}
\newtheorem{lem}[pulse]{\bf \textsf{Lemma}}
\newtheorem{cor}[pulse]{\bf \textsf{Corollary}}
\newtheorem{dummy-eg}[pulse]{\bf \textsf{Example}}
\newtheorem{dummy-rem}[pulse]{\bf \textsf{Remark}}
\newenvironment{rem}{\begin{dummy-rem}\upshape}{\end{dummy-rem}\ignorespacesafterend}
\newtheorem{dummy-def}[pulse]{\bf \textsf{Definition}}
\newenvironment{dfn}{\begin{dummy-def}\upshape}{\end{dummy-def}\ignorespacesafterend}
\newenvironment{proof}{\noindent{\it Proof.}\/}{\hfill$\Box$\newline\ignorespacesafterend}
\newcommand{\supp}{\operatorname{supp}}
\newcommand{\conj}{{\operatorname{Conj}}}
\newcommand{\cB}{{\cal B}}
\newcommand{\om}{\omega} % YC alias (force of habit)
\newcommand{\norm}[1]{\Vert #1 \Vert}
\newcommand{\bm}{\operatorname{\bf m}}
\newcommand{\cH}{{\mathcal H}}
\newcommand{\VN}{\operatorname{VN}}
\newcommand{\ignore}[1]{}
\newcommand{\LO}{L^1(G)}
\newcommand{\LOQ}{L^1(\mathbb{G})}
\newcommand{\LTQ}{L^2(\mathbb{G})}
\newcommand{\LIQ}{L^{\infty}(\mathbb{G})}
\newcommand{\al}{\alpha}
\newcommand{\be}{\beta}
\newcommand{\vphi}{\varphi}
\newcommand{\Lphi}{\Lambda_\varphi}
\newcommand{\lm}{\lambda}
\newcommand{\ten}{\otimes}
\newcommand{\G}{\Bbb{G}}
\newcommand{\mc}[1]{\mathcal{#1}}
\newcommand{\la}{\langle}
\newcommand{\ra}{\rangle}
\newcommand{\Irr}{{\operatorname{Irr}(\mathbb{G})}}
\newcommand{\IrrG}{{\operatorname{Irr}( {G})}}
\newcommand{\wG}{\IrrG}
\newcommand{\M}{\Bbb{M}}
\begin{document}

\maketitle

\begin{abstract}
This paper concerns the study of regular Fourier hypergroups through multipliers of their associated Fourier algebras. We establish hypergroup analogues of well-known characterizations of group amenability, introduce a notion of weak amenability for hypergroups, and show that every discrete commutative hypergroup is weakly amenable with constant 1. Using similar techniques, we provide a sufficient condition for amenability of hypergroup Fourier algebras, which, as an immediate application, answers one direction of a conjecture of Azimifard--Samei--Spronk \cite{AzSaSp} on the amenability of $Z\LO$ for compact groups $G$. In the final section we consider Fourier algebras of hypergroups arising from compact quantum groups $\G$, and in particular, establish a completely isometric isomorphism with the center of the quantum group algebra for compact $\G$ of Kac type.
\end{abstract}

\section{Introduction}\label{s:introduction}

The Fourier algebra $A(G)$ of a locally compact group $G$ is a central object in abstract harmonic analysis, providing remarkable manifestations of analytical and topological properties of $G$. A well-known example is given by Leptin's theorem \cite{lep}, which says that a locally compact group $G$ is amenable if and only if its Fourier algebra $A(G)$ has a bounded approximate identity.

As in the setting of locally compact groups, the \ma{Fourier space} $A(H)$ of a hypergroup $H$ plays an important role in the harmonic analysis.
A hypergroup $H$ whose Fourier space forms a Banach algebra under pointwise multiplication is said to be a \ma{regular Fourier hypergroup} \cite{mu1}, in which case we refer to $A(H)$ as the \ma{Fourier algebra} of the hypergroup. In this work, we study regular Fourier hypergroups through multipliers of $A(H)$, denoted $MA(H)$. Building on work of the first author \cite{ma5}, who established a hypergroup analogue of Leptin's theorem for discrete groups, we show that a discrete hypergroup $H$ satisfies $(P_2)$ if and only if $MA(H)= B_\lambda(H)$ isometrically, where $B_\lambda(H)$ is the reduced Fourier--Stieltjes space of $H$. In the case where $H$ is a discrete group $G$, our techniques yield a new proof of the well-known equivalence of amenability of $G$ and the equality $MA(G)=B_\lambda(G)$. We also introduce a notion of weak amenability for hypergroups and show that every discrete commutative hypergroup is weakly amenable with constant 1. Recall that commutative hypergroups need not necessarily satisfy $(P_2)$, contrary to the group setting.

We then use multipliers to investigate amenability of $A(H)$ for discrete commutative hypergroups. Our main result, Theorem~\ref{t:A(H)-amenable}, gives a sufficient condition for amenability of $A(H)$ in terms of boundedness of the Haar measure. Through the isometric isomorphism $Z\LO\cong A(\mathrm{Irr}(G))$ between the center of a compact group algebra and the Fourier algebra of the associated hypergroup of irreducible representations, our main result shows that if a compact group has an open abelian subgroup then the center of the group algebra $Z\LO$ is amenable; thereby establishing one side of the main conjecture of \cite{AzSaSp}.

Finally, we prove that the center of the group algebra of a compact quantum group of Kac type is completely isometrically isomorphic to the Fourier algebra of the associated discrete hypergroup; a result which initiates many questions regarding the applications of discrete hypergroups and their Fourier algebras to compact quantum groups.

The paper is organized as follows. We start in Section~\ref{s:preliminaries} with preliminaries and a brief overview of the Fourier algebra of hypergroups.
 Section~\ref{s:MA&M-of-A(H)} begins with some observations on completely bounded multipliers of Fourier algebras, leading to  Subsection~\ref{ss:multipliers-A(H)}, where we focus on their relationship with the reduced Fourier--Stieltjes space for discrete and commutative hypergroups.
 In Subsection~\ref{ss:ai-A(H)} we introduce weak amenability for hypergroups and prove that every commutative hypergroup is weakly amenable with constant 1. We then study amenability of Fourier algebras of discrete commutative hypergroups in Section~\ref{s:AM-A(H)}. Applications to central algebras on compact and discrete groups are given respectively in Subsections~\ref{ss:compact-groups} and \ref{ss:discrete-groups}. Finally, in Section~\ref{s:CQG} we establish the aforementioned relationship with central subalgebras of compact quantum groups.

\section{Preliminaries and notations}\label{s:preliminaries}

\subsection{Hypergroups}

For a locally compact Hausdorff space $H$ we denote by $C_b(H)$ the space of bounded continuous complex valued functions on $H$, and by $C_c(H)$ and $C_0(H)$ the subspaces of functions with compact support and vanishing at infinity, respectively. We denote by $M(H)$ the space of bounded Radon measures on $H$. For any $\mu\in M(H)$, let $\supp(\mu)$ denote the support of $\mu$. For each $x\in H$, we also denote by $x$ the corresponding point mass in $M(H)$. Here we follow the definition of a hypergroup as given by R. I. Jewett \cite{je}. We refer the reader to \cite{bl} for details as well as known results on hypergroups.

\begin{dfn}\label{d:hypergroups}
A locally compact Hausdorff space $H$ is a \ma{hypergroup} if the following conditions are satisfied:
\begin{itemize}
\item[1.]{There exists a binary operation on $M(H)$ under which $M(H)$ is an algebra where $M(H) \times M(H) \ni (\mu,\nu) \mapsto \mu \cdot \nu \in M(H)$. Moreover, for every $x, y$ in $H$, $x \cdot y$ is a probability measure and the mapping $(x, y) \mapsto x \cdot y$ is continuous from $H\times  H$ into $M(H)$ equipped with the topology $\sigma(M(H), C_b(H))$.}
\item[2.]{There exists a (unique) element $e$ in $H$ such that $ e\cdot \mu  =\mu \cdot e = \mu$ for all $\mu \in M(H)$.}
\item[3.]{There exists a homeomorphism $x\mapsto \tilde{x}$ of $H$ onto $H$  satisfying $\tilde{\tilde{x}}=x$,  $(x\cdot y\tilde{)} = \tilde{y} \cdot \tilde{x}$, and $e \in \supp(x \cdot y)$ if and only if $x =\tilde{y}$ for all $x,y \in H$.}
\item[4.]{ The set $\supp( x\cdot y)$ is compact. Moreover, the mapping $( x , y )\mapsto \supp (x \cdot y)$
is continuous from $H\times H$ into   the space of all non-empty compact subsets
of $H$ equipped with the ``Michael topology".}
\end{itemize}
  \end{dfn}

For each pair $x,y \in H$ and $f \in C_0(H)$, we denote the integral $\int f d(x \cdot y)$ by $f(x \cdot y)$ and  the left translation $L_x f(y)$ is defined by $f(\tilde{x}\cdot y)$.
All hypergroups considered in this article are assumed to possess a left invariant Haar measure $\lambda$, and we use $\int_H dx$ to denote the integration with respect to $\lambda$. Note that the existence of Haar measures for commutative/compact/discrete hypergroups can be proved (see \cite{je}).

We let $(L^p(H), \norm{\cdot}_p)$, $1 \leq p \leq  \infty$, denote the usual $L^p$-space with respect to a fixed left Haar measure $\lambda$. For each $f \in L^1(H)$ and $g \in L^p(H)$ for $1 \leq p \leq \infty$, it follows that
 \[
 f \cdot_\lambda g (x):=\int_H  f( y)g(\tilde{y} \cdot x )dy
 \]
 belongs to $L^p(H)$ and $\norm{f \cdot_\lambda  g}_p \leq  \norm{f}_1 \norm{g}_p$.
 In particular,  $L^1(H)$ is a Banach $\ast$-algebra with this action and  the involution  $f^*(x)=\Delta(\tilde{x})f(\tilde{x})$ where $\Delta$ is the modular function with respect to the left Haar measure $\lambda$.

A hypergroup $H$ is \ma{commutative} if $\mu \cdot \nu= \nu \cdot \mu$ for every pair $\mu,\nu \in M(H)$.
We define $\widehat{H}:=\{ \chi \in C_b(H): \chi(x \cdot y) = \chi(x) \chi(y)\ \text{and}\ \chi(\tilde{x})= \overline{\chi(x)}\}$,
equipped with the topology of uniform convergence on compact subsets of  $H$. One can  define Fourier and Fourier-Stieltjes transforms on $L^1(H)$ and $M(H)$ respectively, similar to the group case. In particular, there is a positive measure $\varpi$ on $\widehat{H}$, called the \ma{Plancherel measure}, such that the Fourier transform can be extended to a isometric isomorphism from  $L^2(H)$ onto $L^2(\supp(\varpi), \varpi)$.  Unlike the group case, $\widehat{H}$ does not necessarily form a hypergroup and $\supp(\varpi)$ may be a proper subset of $\widehat{H}$.

\subsection{Fourier spaces of hypergroups}

We now give a brief overview of the Fourier and Fourier--Stieltjes spaces of a hypergroup. We refer the reader to \cite{mu1} for details.
Let $\Sigma_{H}$ denote the equivalence classes of all representations of $H$ (see \cite{bl}). Abusing notation, let $\lambda$ denote the left regular representation of $H$ on $L^2(H)$ given by
$\lambda(x)\xi(y)=\xi (\tilde{x}\cdot y)$ for all $x,y\in H$ and for all $\xi \in L^2(H)$. This can be extended to $L^1(H)$ where $\lambda(f)\xi:=f \cdot_\lambda \xi$ for $f\in L^1(H)$ and $\xi \in L^2(H)$.
Let $C^*_\lambda(H)$ denote the  completion of $\lambda(L^1(H))$ in $\mathcal{B}(L^2(H))$ which is called the \ma{reduced $C^*$-algebra} of $H$. The von Neumann algebra generated by $\{\lambda(x): x \in H\}$ is called the \ma{von Neumannn algebra} of $H$, and is denoted by $\VN(H)$.
For any $f$ in $L^1(H)$, we define
\[
\norm{f}_{C^*(H)}: = \sup \{\norm{\pi(f)}: \ \pi \in \Sigma_H\}.
\]
The completion of $L^1(H)$ with respect to $\norm{\cdot}_{C^*(H)}$ is called the \ma{full $C^*$-algebra} of $H$ and is denoted by $C^*(H)$.
The dual of $C^*(H)$ is called the \ma{Fourier Stieltjes space} and is denoted by $B(H)$.
Let $B_\lambda(H)$ denote the dual of the reduced $C^*$-algebra $C^*_\lambda(H)$. It is known that $B_\lambda(H)$ can be realized as a closed subspace of $B(H)$. In particular, for a commutative hypergroup $H$, $B(H)$ is the closed linear span of $\widehat{H}$ while,  the linear space  of all elements in $\supp(\varpi)$ is dense in $B_\lambda(H)$.

The closed subspace of $B_\lambda(H)$ spanned by $\{ \xi\cdot_\lambda \tilde{\xi}: \xi\in C_c(H)\}$ is called the \ma{Fourier space} of $H$ and is denoted by $A(H)$. The dual of $A(H)$ can be identified with $\VN(H)$.
For a unimodular hypergroup $H$, it follows that $A(H)=\{ \xi \cdot_\lambda \tilde{\eta}: \ \xi,\eta\in L^2(H)\}$ and $\norm{u}_{A(H)}= \inf\norm{\xi}_{2} \norm{\eta}_2$, where the infimum is taken over all $\xi, \eta \in L^2(H)$ for which $u = \xi \cdot_\lambda \tilde{\eta}$ (see \cite{mu1} for commutative hypergroups and \cite[Proposition~1.3]{ma5} for general unimodular hypergroups).
Contrary to the group case (\cite{ey}), $B(H)$, $B_\lambda(H)$, and $A(H)$ do not necessarily form Banach algebras with respect to pointwise multiplication. If $A(H)$ forms a Banach algebra with respect to pointwise multiplication, the hypergroup $H$ is said to be a \ma{regular Fourier hypergroup}.

%In \cite{mu1}, the Fourier space of commutative hypergroups were studied. For example, $A(H)$ is isometrically Banach space isomorphism to $L^1(\supp(\varpi),\varpi)$. Further, there is a complete characterization for commutative regular Fourier hypergroups, that is    for a commutative hypergroup $H$,
%$H$ is a regular  Fourier hypergroup if and only if   for every pair $\chi_1,\chi_2 \in \supp(\varpi)$, $\chi_1\chi_2 \in B_\lambda(H)$ and $\norm{\chi_1\chi_2}_{B_\lambda(H_0)}\leq  1$. Indeed, for commutative regular Hypergroups, $B_\lambda(H)$ forms a Banach algebra as well.    This is not known for a general regular Fourier hypergroup.

%\subsection{Fourier algebra and its multipliers}\label{s:Fourier}
%\vskip0.5em

A hypergroup $H$ is said to satisfy property $(P_p)$, for $1\leq p < \infty$, if there is a net $(\xi_\alpha)_\alpha$ in $L^p(H)$ such that  $\xi_\alpha \geq 0$ and $\norm{\xi_\alpha}_p=1$ for all $\alpha$, and for every compact set $E\subseteq  H$,
\[
\lim_\alpha \norm{x \cdot \xi_\alpha - \xi_\alpha }_p =0\ \ \ \ \ (x\in E).
\]
The Reiter conditions $(P_p)$ and  other notions of amenability for hypergroups were studied extensively in \cite{ma5, izu, sk-la,  sk, singh-mem, ben1, ben2}.  Recall from \cite{sk} that for a commutative hypergroup $H$, the constant character $1$ belongs to  $\supp(\varpi)$ if and only if $H$ satisfies $(P_2)$. Based on these results, we see that property $(P_2)$ is an appropriate hypergroup analogue of group amenability. Indeed, it is known that $(P_1)\neq(P_2)$ for general hypergroups, so the existence of a left invariant mean on $L^{\infty}(H)$ does not capture the essential features of hypergroup amenability. Another justification is provided by the following result, which we will make extensive use of throughout the paper.

\begin{thm}\label{t:Leptin-thm}\cite[Theorem~3.3]{ma5}\\
Let $H$ be a discrete regular Fourier hypergroup. Then  $A(H)$ has a bounded approximate identity if and only if $H$ satisfies $(P_2)$. Further, in this case, $A(H)$ has a contractive approximate identity consisting of finitely supported positive definite elements.
 \end{thm}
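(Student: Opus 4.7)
The plan is to prove both implications by translating between the condition $(P_2)$ and the existence of a bounded approximate identity (BAI) via the matrix-coefficient realization of $A(H)$. Throughout, I exploit the factorization $A(H) = \{\xi \cdot_\lambda \tilde{\eta} : \xi,\eta \in L^2(H)\}$ recalled from the preliminaries, together with the fact that for normalized $\xi \in L^2(H)$ with $\xi \ge 0$ the element $u = \xi \cdot_\lambda \tilde{\xi}$ is a positive definite element of $A(H)$ with $\|u\|_{A(H)} \le 1$ and $u(e) = \|\xi\|_2^2 = 1$.

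For $(P_2)\Rightarrow$ BAI, take a $(P_2)$-net $(\xi_\alpha)\subseteq L^2(H)$ with $\xi_\alpha \ge 0$ and $\|\xi_\alpha\|_2 = 1$, and set $u_\alpha := \xi_\alpha \cdot_\lambda \tilde{\xi}_\alpha \in A(H)$. The $(P_2)$ condition, recast via the matrix-coefficient identity
\[
2 - 2\,\mathrm{Re}\,u_\alpha(x) \;=\; \|x \cdot \xi_\alpha - \xi_\alpha\|_2^2 \;\longrightarrow\; 0,
\]
yields $u_\alpha \to 1$ uniformly on finite subsets of $H$. To upgrade this to $\|u_\alpha v - v\|_{A(H)}\to 0$ for every $v \in A(H)$, I would argue by density: finitely supported elements $\eta \cdot_\lambda \tilde{\zeta}$ span a dense subspace of $A(H)$ by the factorization above, on such $v$ the $A(H)$-norm of $u_\alpha v - v$ is controlled by the $L^2$-mismatch between $(u_\alpha - 1)$ and the factors of $v$, and uniform boundedness of $(u_\alpha)$ completes a standard $3\varepsilon$ argument. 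Replacing each $\xi_\alpha$ by its truncation to a sufficiently large finite subset then yields a \emph{finitely supported}, positive definite, contractive approximate identity.

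For BAI $\Rightarrow (P_2)$, the main obstacle is the extraction of a positive definite contractive BAI from an arbitrary bounded one. I would proceed in two steps. First, via a convex-averaging/symmetrization argument (in the spirit of the classical Leptin proof), together with the factorization of $A(H)$ above, produce a new BAI $(v_\beta)$ of contractive positive definite elements of the form $v_\beta = \xi_\beta\cdot_\lambda \tilde{\xi}_\beta$ with $\xi_\beta \in L^2(H)$ and $\|\xi_\beta\|_2 = 1$; replacing $\xi_\beta$ by $|\xi_\beta|$ if needed, one may further assume $\xi_\beta \ge 0$. Second, the BAI property forces $v_\beta \to 1$ pointwise on $H$: for fixed $x \in H$, pick any $w \in A(H)$ with $w(x) \ne 0$ (available in the discrete case via a normalized element supported near $x$) and observe that $\|v_\beta w - w\|_{A(H)}\to 0$ combined with the fact that point evaluation at $x$ lies in $\VN(H) = A(H)^*$ gives $v_\beta(x)\to 1$; uniformity over a finite subset $E \subseteq H$ follows by applying the BAI condition to an element of $A(H)$ equal to $1$ on $E$. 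The matrix-coefficient identity above then delivers $\|x \cdot \xi_\beta - \xi_\beta\|_2 \to 0$ uniformly on $E$, establishing $(P_2)$. The symmetrization-and-positivity step in this direction is the subtlest part of the argument, since the hypergroup setting lacks the group-theoretic involution that makes the classical $u \mapsto u\overline{u}$ trick immediate, and so the structural results on positive definite elements of $A(H)$ must be invoked carefully.
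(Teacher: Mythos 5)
The paper does not actually prove this statement: it is imported wholesale from \cite[Theorem~3.3]{ma5} and used as a black box (e.g.\ in the proof of Theorem~\ref{t:multiplier-A(H)}), so there is no internal argument to compare yours against; I therefore assess your proposal on its own. Your direction $(P_2)\Rightarrow\text{BAI}$ is essentially sound, but the central identity you invoke, $2-2\,\mathrm{Re}\,u_\alpha(x)=\norm{x\cdot\xi_\alpha-\xi_\alpha}_2^2$, is false for hypergroups: the translation operators $\lambda(x)$ are contractions on $L^2(H)$ but in general not isometries, so the correct relation is $\norm{x\cdot\xi-\xi}_2^2=\norm{x\cdot\xi}_2^2+\norm{\xi}_2^2-2\,\mathrm{Re}\,u(x)$. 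Fortunately the inequality $\norm{x\cdot\xi}_2\le\norm{\xi}_2$ points the right way in both implications ($(P_2)$ forces $\norm{x\cdot\xi_\alpha}_2\to1$ and hence $u_\alpha(x)\to1$; conversely $u_\beta(x)\to1$ gives $\norm{x\cdot\xi_\beta-\xi_\beta}_2^2\le2-2\,\mathrm{Re}\,u_\beta(x)\to0$), so this is a repairable imprecision rather than a fatal error; the density/$3\varepsilon$ upgrade is fine because $1_{\{x\}}\in A(H)$ for discrete $H$ and $A(H)$ is a Banach algebra by the regular Fourier hypothesis.

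The genuine gap is the first step of $\text{BAI}\Rightarrow(P_2)$. You assert that a ``convex-averaging/symmetrization argument'' converts an arbitrary bounded approximate identity into one of the form $v_\beta=\xi_\beta\cdot_\lambda\tilde{\xi}_\beta$ with $\norm{\xi_\beta}_2=1$, but this is precisely the hard content of the theorem and you supply no mechanism for it: a generic element of $A(H)$ is a coefficient $\xi\cdot_\lambda\tilde{\eta}$ with two \emph{different} vectors, the group trick $u\mapsto u\overline{u}$ is unavailable (as you concede), and a convex combination of elements $\xi\cdot_\lambda\tilde{\xi}$ is positive definite but not obviously again of that single-vector form. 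The missing idea is a weak* compactness and weak-containment argument: a BAI is a bounded net in $A(H)\subseteq B_\lambda(H)=C^*_\lambda(H)^*$, hence has a weak* cluster point which must act as the constant function $1$, so $1\in B_\lambda(H)$ is a multiplicative \emph{state} on $C^*_\lambda(H)$; extending it to $\VN(H)$ and using that $\VN(H)$ is standardly represented on $L^2(H)$, one approximates this state weak* by vector states $\om_{\xi_\beta}$ with unit vectors $\xi_\beta\in L^2(H)$, which yields $\la\lambda(x)\xi_\beta,\xi_\beta\ra\to1$; only then does the passage to $|\xi_\beta|$ (legitimate because $|\lambda(x)\xi|\le\lambda(x)|\xi|$ pointwise for hypergroups, so $\mathrm{Re}\,\la\lambda(x)|\xi|,|\xi|\ra\ge|\la\lambda(x)\xi,\xi\ra|$) and the estimate above deliver $(P_2)$. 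Without this Godement-type step your converse does not close, and the ``finitely supported'' refinement in the last clause of the theorem inherits the same dependence.
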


\section{Multipliers and completely bounded multipliers of $A(H)$}\label{s:MA&M-of-A(H)}

 Recall that a Banach algebra $A$ equipped with an operator space structure is called  a \emph{completely bounded Banach algebra}  if there exits a constant $C\geq 1$ so that
 \[
 \| [a_{i,j} b_{k,l} ] \|_{\M_{mn}(A)} \leq C \| [a_{i,j}]\|_{\M_n(A)} \|[b_{k,l}]\|_{\M_m(A)}
 \]
 for every $ [a_{i,j}] \in \M_n(A)$ and $[b_{k,l}] \in \M_m(A)$ and $n,m \in \Bbb{N}$.  We say that $A$ is \emph{completely contractive} if $C=1$. Denoting by $\widehat{\otimes}$ the operator space projective tensor product, it follows that $A$ is completely bounded if and only if the linearization of the product
extends to a completely bounded map $m_{A} : A\widehat{\otimes} A\to A$. In what follows we adopt the notation of \cite{ruan-effros}, to which we refer the reader for details on operator spaces and completely bounded Banach algebras.

For a commutative Banach algebra $A$, a bounded linear operator $T\in\mathcal{B}(A)$ is a \emph{multiplier} of $A$ if  $T(a)b=aT(b)$ for every pair $a,b\in A$.  If $A$ is a completely bounded Banach algebra,  every multiplier $T$ which also belongs to $\mathcal{CB}(A)$ is called a \emph{completely bounded multiplier}. We use $MA$ and $M_{cb}A$ to denote, respectively,  the spaces of   bounded and completely bounded multipliers of $A$. Note that $MA$ is a closed subalgebra of $\mathcal{B}(A)$ and $M_{cb}A$ is a closed subalgebra of $\mathcal{CB}(A)$, so they form Banach algebras with the inherited norms.
It is known that for a commutative Banach algebra $A$, every element in $MA$ (and subsequently in $M_{cb}A$) can be identified uniquely with a bounded continuous function on the maximal ideal space of $A$ (see e.g. \cite[Proposition~2.2.16]{kaniuth}). Also, if $A$ is completely bounded, it follows that $A$ injects continuously into $M_{cb}A$.  In particular, if $A$ has a bounded approximate identity, the norms $\norm{\cdot}_A$, $\norm{\cdot}_{MA}$, and $\norm{\cdot}_{M_{cb}A}$ are equivalent on $A$; therefore, $A$ is a closed ideal in both $MA$ and $M_{cb}A$.

As the pre-dual of the hypergroup von Neumann algebra $\VN(H)$, the Fourier space $A(H)$ of any hypergroup inherits a natural operator space structure.

To study the norm of completely bounded multipliers of the Fourier algebra, we need the following hypergroup modification of a well-known result by de Canni{\`e}re  and Haagerup \cite[Theorem~1.6]{haa}. Here we use finite groups to avoid some difficulties in Subsection~\ref{ss:multipliers-A(H)}. The proof presented here is similar to the group case so we skip the details.

\begin{proposition}\label{p:SU(2)-in-multipliers}
Let $H$ be a regular Fourier hypergroup. Then $u \in M_{cb}A(H)$ if and only if $u\times 1_{G}$ belongs to $MA(H \times  G)$ for every finite group $G$ and $\sup_G \norm{u \times 1_{G} }_{MA(H \times G)} <\infty$. Further,
\[
\sup_G \norm{u \times 1_{G} }_{MA(H \times G)} =\norm{u}_{M_{cb}A(H)}.
\]
\end{proposition}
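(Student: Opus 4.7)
The plan is to dualise and exploit the Wedderburn decomposition of $\VN(G)$ when $G$ is finite: tensoring with $\VN(G)$ amounts to forming a finite direct sum of matrix amplifications of $\VN(H)$, and multiplier norms on $A(H \times G)$ can then be read off block-wise on the von Neumann side.

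First, for any finite group $G$ with set of irreducible representations $\widehat{G}$ and corresponding dimensions $(d_\pi)_{\pi \in \widehat{G}}$, Peter--Weyl gives
\[
\VN(H \times G) \;\cong\; \VN(H) \oten \VN(G) \;\cong\; \bigoplus_{\pi \in \widehat{G}} M_{d_\pi}(\VN(H))
\]
as von Neumann algebras, with the corresponding $\ell^1$-direct sum decomposition of the operator-space preduals. (The product $H \times G$ is automatically a regular Fourier hypergroup since $G$ is finite.)

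Next, the multiplier $T = M_{u \times 1_G}$ on $A(H \times G)$ has dual map $T^\ast = \widehat{M}_u \ten \id_{\VN(G)}$ on $\VN(H \times G)$, where $\widehat{M}_u \colon \VN(H) \to \VN(H)$ is the transpose of multiplication by $u$. Under the above decomposition, $T^\ast$ acts as $\bigoplus_\pi (\widehat{M}_u)^{(d_\pi)}$, the $d_\pi$-amplification on each summand $M_{d_\pi}(\VN(H))$. Since the norm of a direct sum of operators is the supremum of its block norms, and dualisation preserves operator norm,
\[
\Norm{u \times 1_G}_{MA(H \times G)} \;=\; \Norm{T^\ast} \;=\; \max_{\pi \in \widehat{G}} \Norm{M_u^{(d_\pi)}}_{\mc{B}(M_{d_\pi}(A(H)))}.
\]

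Finally, by definition $\Norm{u}_{M_{cb}A(H)} = \sup_{n \geq 1} \Norm{M_u^{(n)}}$, and this sequence is non-decreasing in $n$ (via the unital isometric corner embedding $M_n \hookrightarrow M_{n+1}$). The set $\{d_\pi : G\ \text{finite},\ \pi \in \widehat{G}\}$ equals all of $\Nat$ (for instance $S_{n+1}$ carries the standard irreducible representation of dimension $n$), so taking the supremum over all finite $G$ in the previous display yields
\[
\sup_G \Norm{u \times 1_G}_{MA(H \times G)} \;=\; \sup_{n \geq 1} \Norm{M_u^{(n)}} \;=\; \Norm{u}_{M_{cb}A(H)},
\]
from which the ``iff'' assertion follows at once (taking $G = \{e\}$ ensures $u \in MA(H)$ first). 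The main obstacle is the natural identification of $T^\ast$ with $\bigoplus_\pi (\widehat{M}_u)^{(d_\pi)}$ under the Wedderburn decomposition; once this tracking of how $\id_{\VN(G)}$ distributes across the blocks is done, the rest is bookkeeping paralleling the group case of de Canni\`ere--Haagerup, which is why the authors are content to omit the details.
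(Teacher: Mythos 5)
Your proof is correct and follows essentially the same route as the paper: both arguments rest on the Wedderburn/Peter--Weyl decomposition $\VN(H\times G)\cong\bigoplus_{\pi\in\widehat{G}}M_{d_\pi}(\VN(H))$, the identification of the dual of $m_{u\times 1_G}$ with the block amplifications of $M_u$, and the fact that every $n\in\Nat$ occurs as the dimension of an irreducible representation of some finite group. The only (minor) difference is that for the inequality $\norm{u\times 1_G}_{MA(H\times G)}\leq\norm{u}_{M_{cb}A(H)}$ the paper invokes de Canni\`ere--Haagerup's Lemma 1.5 to produce $M_u\oten\id_{\VN(G)}$ with norm at most $\norm{M_u}_{cb}$, whereas you obtain both inequalities simultaneously from the exact block formula $\norm{u\times 1_G}_{MA(H\times G)}=\max_{\pi}\Vert M_u^{(d_\pi)}\Vert$, which is available precisely because $G$ is finite --- a slightly more self-contained rendering of the same argument.
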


\begin{proof}
For each finite group $G$, $H\times G$ forms a hypergroup. Further, $ \VN(H \times G) $ coincides with $\VN(H)  \overline{\otimes} \VN(G)$.

Let $u \in M_{cb}A(H)$. The adjoint of $m_u(v) = uv$, $v\in A(H)$, denoted $M_u$, defines a completely bounded map from $VN(H)$ into $\VN(H)$. Therefore, by \cite[Lemma~1.5]{haa}, there exists a $\sigma$-weakly continuous operator $\tilde{M}_u$ on $\VN(H \times G) = \VN(H)  \bar{\otimes}  \VN(G)$  such that
\[
\tilde{M}_u(f \otimes g)= M_u(f) \otimes g\ \ (\text{for all } \ \ f\in \VN(H)\ \text{and}\ g\in \VN(G)).
\]
Moreover,
\[
\norm{u \otimes 1_G}_{MA(H\times G)} = \norm{\tilde{M}_u} \leq \norm{M_u}_{cb} = \norm{u}_{M_{cb}A(H)}.
\]

Conversely, for each $n\in \Bbb{N}$ there exists a finite group $G$ with an $n$-dimensional irreducible representation. Then $\VN(G)=\bigoplus_{i=1}^{m}  \M_{k_i} (\Bbb{C})$ where $k_i$'s are the dimension of irreducible representations of $G$, with $k_m=n$. Hence,
\[
\VN(H \times G) \cong \bigoplus_{i=1}^m  \M_{k_i}(\VN(H)).
\]
By restriction to the $m$th component of $\VN(H\times G)$, we have $\norm{M_u \otimes id_{n}} \leq \norm{M_{u \otimes 1_G} }$.
Thus,
\[
\norm{u}_{M_{cb}A(H)} = \norm{M_u}_{cb}\leq \sup_G \norm{M_{u \otimes 1_G}} =\sup_G \norm{u \otimes 1_G}_{MA(H \times G)}.
\]
\end{proof}

\begin{dfn}\label{d:completely-Fourier}
A regular Fourier hypergroup $H$ is called an \ma{operator Fourier hypergroup} if $A(H)$ forms a completely bounded Banach algebra under its canonical operator space structure.
\end{dfn}

It is known that every locally compact group is an operator Fourier hypergroup (see \cite[Section~16.2]{ruan-effros}). However, it is not clear that every regular Fourier hypergroup forms a completely bounded Banach algebra. In what follows we show that this is indeed the case for commutative hypergroups and discrete hypergroups arising from compact quantum groups of Kac type.

For two non-commutative regular Fourier hypergroups $H_1$ and $H_2$, it is not know whether the product hypergroup $H_1 \times H_2$ remains a regular Fourier hypergroup. In the following proposition we prove that the class of operator Fourier hypergroups is closed under products.

\begin{prop}\label{p:product}
Let $H_1$ and $H_2$ be two operator Fourier hypergroups. Then $H_1 \times H_2$ is also an operator Fourier hypergroup.
\end{prop}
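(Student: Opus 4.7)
The plan is to identify $A(H_1 \times H_2)$ completely isometrically with the operator space projective tensor product $A(H_1) \widehat{\otimes} A(H_2)$, after which the pointwise product on the product hypergroup factors through the canonical shuffle map and the tensor product of the individual multiplication maps.

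First, I would record that $\VN(H_1 \times H_2) = \VN(H_1) \bar{\otimes} \VN(H_2)$ as von Neumann algebras, exactly as in the proof of Proposition~\ref{p:SU(2)-in-multipliers}. This rests on the facts that the Haar measure on the product hypergroup is the tensor product of the Haar measures on the factors (so $L^2(H_1 \times H_2) \cong L^2(H_1) \otimes L^2(H_2)$) and that the left regular representation factors as $\lambda_{H_1 \times H_2}(x_1, x_2) = \lambda_{H_1}(x_1) \otimes \lambda_{H_2}(x_2)$. Combining this with the Effros--Ruan identification of the operator space predual of a von Neumann tensor product, and using that each $A(H_i)$ is the canonical operator space predual of $\VN(H_i)$ (as recorded in Section~\ref{s:preliminaries}), one obtains the completely isometric identification
\[
A(H_1 \times H_2) \,=\, \bigl(\VN(H_1) \bar{\otimes} \VN(H_2)\bigr)_{*} \,=\, A(H_1) \widehat{\otimes} A(H_2).
\]

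With this identification in hand, observe that pointwise multiplication on $A(H_1 \times H_2)$ acts on elementary tensors by $(u_1 \otimes u_2)(v_1 \otimes v_2) = (u_1 v_1) \otimes (u_2 v_2)$. Hence the multiplication map factors as $m_{A(H_1 \times H_2)} = (m_{H_1} \widehat{\otimes} m_{H_2}) \circ \Sigma$, where
\[
\Sigma : \bigl(A(H_1) \widehat{\otimes} A(H_2)\bigr) \widehat{\otimes} \bigl(A(H_1) \widehat{\otimes} A(H_2)\bigr) \longrightarrow \bigl(A(H_1) \widehat{\otimes} A(H_1)\bigr) \widehat{\otimes} \bigl(A(H_2) \widehat{\otimes} A(H_2)\bigr)
\]
is the canonical shuffle map, which is a completely isometric isomorphism of operator space projective tensor products. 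Since each $m_{H_i}$ is completely bounded by hypothesis, so is $m_{A(H_1 \times H_2)}$, with $\norm{m_{A(H_1 \times H_2)}}_{cb} \leq \norm{m_{H_1}}_{cb} \cdot \norm{m_{H_2}}_{cb}$. Consequently $A(H_1 \times H_2)$ is a completely bounded Banach algebra under pointwise multiplication, and $H_1 \times H_2$ is an operator Fourier hypergroup.

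The main step requiring care is the predual identification $A(H_1 \times H_2) = A(H_1) \widehat{\otimes} A(H_2)$: one must verify that the abstract operator space predual of $\VN(H_1 \times H_2)$ produced by Effros--Ruan really coincides with the intrinsically defined Fourier space $A(H_1 \times H_2) \subseteq B_\lambda(H_1 \times H_2)$. This should reduce to checking on elementary vectors, namely that for $\xi_i, \eta_i \in C_c(H_i)$ the coefficient $(\xi_1 \otimes \xi_2) \cdot_\lambda \widetilde{(\eta_1 \otimes \eta_2)}$ on $H_1 \times H_2$ factors as the tensor product of the corresponding coefficients on $H_1$ and $H_2$, and that these span a norm-dense subspace on both sides.
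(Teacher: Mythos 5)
Your argument is correct and follows essentially the same route as the paper: identify $A(H_1\times H_2)$ completely isometrically with $A(H_1)\widehat{\otimes}A(H_2)$ via the predual of $\VN(H_1)\bar{\otimes}\VN(H_2)$, apply the commutativity (shuffle) isomorphism of the operator space projective tensor product, and observe that $m_{H_1}\otimes m_{H_2}$ is completely bounded. Your additional remark about checking the abstract predual against the intrinsically defined Fourier space on elementary coefficients is a reasonable elaboration of a step the paper passes over by citation, but it does not constitute a different proof.
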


\begin{proof}
By \cite[Theorem~7.2.4]{ruan-effros} we have the completely isometric identifications
$$A(H_1 \times H_2)\cong\VN(H_1 \times H_2)_*\cong(\VN(H_1) \bar{\otimes} \VN(H_2))_*\cong A(H_1) \widehat{\otimes} A(H_2).$$
By commutativity of $\widehat{\otimes}$ \cite[Theorem~7.1.4]{ruan-effros} we also have
\[
(A(H_1)\widehat{\otimes} A(H_2)) \widehat{\otimes} (A(H_1) \widehat{\otimes} A(H_2)) \cong (A(H_1)\widehat{\otimes} A(H_1)) \widehat{\otimes} (A(H_2) \widehat{\otimes} A(H_2)).
\]
Thus,
\[
m_1 \otimes  m_2 : (A(H_1)\widehat{\otimes} A(H_2)) \widehat{\otimes} (A(H_1) \widehat{\otimes} A(H_2)) \rightarrow A(H_1)\widehat{\otimes} A(H_2)
\]
is completely bounded (\cite[Corollary~7.1.3]{ruan-effros}). Therefore,  $A(H_1 \times H_2)$ is a completely bounded Banach algebra.
\end{proof}

 \subsection{$MA(H)$ for commutative and discrete hypergroups}\label{ss:multipliers-A(H)}

For a locally compact group $G$, it is well-known that $MA(G)=B_{\lambda}(G)$ if and only if $G$ is amenable. In this section we prove a corresponding result for every commutative and/or discrete regular Fourier hypergroup $H$. Recall that there are commutative hypergroups which do not satisfy $(P_2)$ \cite{sk}.

\begin{proposition}\label{p:M(A(H))=B-lambda-commutative}
Let $H$ be a commutative regular Fourier hypergroup. Then $H$ is an operator Fourier hypergroup and $M_{cb}A(H) = MA(H)$ with equal norms. Also,  we get  $MA(H)=B_\lambda(H)$ with equal norms  if and only if $H$ satisfies $(P_2)$.
\end{proposition}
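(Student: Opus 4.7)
The plan is to deduce the first two assertions from operator-space generalities built on the fact that $\VN(H)$ is abelian, and then establish the equivalence by a standard approximate-identity / weak-$*$-compactness argument. Since $H$ is commutative, $\lambda(x)\lambda(y)=\lambda(y)\lambda(x)$ for all $x,y\in H$, so $\VN(H)$ is an abelian von Neumann algebra and therefore carries the minimal operator space structure $\mathrm{MIN}$. By the standard $\mathrm{MIN}/\mathrm{MAX}$ duality on preduals \cite{ruan-effros}, $A(H)=\VN(H)_*$ carries the maximal structure $\mathrm{MAX}$. Any bounded linear map out of a $\mathrm{MAX}$ space is completely bounded with equal $\mathrm{cb}$-norm; applied to a multiplier $T : A(H) \to A(H)$ this immediately gives $M_{cb}A(H)=MA(H)$ isometrically. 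Moreover, the operator space projective tensor product of two $\mathrm{MAX}$ spaces equals $\mathrm{MAX}$ on the underlying Banach projective tensor product, so $A(H)\widehat{\otimes} A(H)$ is itself $\mathrm{MAX}$; since $H$ is a regular Fourier hypergroup, the linearized multiplication $m : A(H)\widehat{\otimes} A(H) \to A(H)$ is bounded, hence completely bounded, making $H$ an operator Fourier hypergroup.

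For the equivalence, I would first prove $(P_2)\Rightarrow MA(H)=B_\lambda(H)$. Commutative hypergroups are unimodular, so for the $(P_2)$-net $\{\xi_\alpha\}\subset L^2(H)$ the elements $u_\alpha:=\xi_\alpha\cdot_\lambda\tilde\xi_\alpha\in A(H)$ are positive definite, satisfy $\|u_\alpha\|_{A(H)}\le 1$, and converge to $1$ uniformly on compact subsets of $H$. A Leptin-type argument in the spirit of Theorem~\ref{t:Leptin-thm} (adapted from the discrete case via Plancherel theory in the general commutative regime) upgrades $\{u_\alpha\}$ to a contractive approximate identity for $A(H)$. Given $u\in MA(H)$, the net $\{u u_\alpha\}\subset A(H)$ then satisfies $\|u u_\alpha\|_{B_\lambda(H)}\le \|u u_\alpha\|_{A(H)}\le \|u\|_{MA(H)}$, so by weak-$*$ compactness of the ball in $B_\lambda(H)=C^*_\lambda(H)^*$ a subnet converges weak-$*$ to some $v\in B_\lambda(H)$ with $\|v\|_{B_\lambda}\le \|u\|_{MA}$. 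Testing against compactly supported functions in $C^*_\lambda(H)$ identifies $v$ with $u$ pointwise on $H$, yielding $u\in B_\lambda(H)$ with the required bound; the reverse inequality is automatic from the contractive inclusion $B_\lambda(H)\hookrightarrow MA(H)$.

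For the converse $MA(H)=B_\lambda(H)\Rightarrow(P_2)$, the constant function $1$ is the identity multiplier, so $1\in MA(H)$ with $\|1\|_{MA(H)}=1$; the hypothesis then gives $1\in B_\lambda(H)$ with $\|1\|_{B_\lambda(H)}=1$. For a commutative hypergroup, this forces the trivial character to lie in $\supp(\varpi)$, which by Skantharajah's theorem \cite{sk} is precisely property $(P_2)$. The main technical obstacle I foresee is promoting the $(P_2)$-net to a genuine bounded approximate identity of $A(H)$ in the commutative but possibly non-discrete setting, since Theorem~\ref{t:Leptin-thm} is stated only for discrete hypergroups; Plancherel-theoretic input is needed to pass from uniform-on-compacta convergence $u_\alpha\to 1$ to the $A(H)$-norm convergence $u_\alpha v\to v$ for every $v\in A(H)$. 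All remaining steps are formal consequences of the MAX/MIN setup and standard weak-$*$ compactness in $B_\lambda(H)$.
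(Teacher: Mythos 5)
Your first paragraph (the MAX/MIN argument giving that $H$ is an operator Fourier hypergroup and $M_{cb}A(H)=MA(H)$ isometrically) is correct and is exactly the paper's argument. The problem is in your treatment of the equivalence $MA(H)=B_\lambda(H)\Leftrightarrow (P_2)$, where your forward direction rests on a step you yourself flag as unresolved and which is in fact the whole difficulty: producing a contractive (or even bounded) approximate identity in $A(H)$ from the $(P_2)$-net. Theorem~\ref{t:Leptin-thm} is proved only for \emph{discrete} regular Fourier hypergroups, and the proposition at hand is stated for arbitrary commutative regular Fourier hypergroups, so you cannot invoke it; nor does the paper supply any non-discrete Leptin-type theorem you could adapt. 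The passage from ``$u_\alpha=\xi_\alpha\cdot_\lambda\tilde\xi_\alpha\to 1$ uniformly on compacta'' to ``$u_\alpha v\to v$ in $\Norm{\cdot}_{A(H)}$ for all $v\in A(H)$'' is not formal (in the group case it is Leptin's theorem, which is a genuine theorem), and without it your weak-$*$ compactness argument never gets off the ground. As written, the forward implication is therefore not proved.

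The paper avoids this issue entirely by a different and much shorter route: it quotes Muruganandam's result \cite[Corollary~4.3]{mu1} that for a commutative regular Fourier hypergroup $MB_\lambda(H)=MA(H)$, observes (via Skantharajah \cite{sk}) that $(P_2)$ is equivalent to $1\in\supp(\varpi)\subseteq B_\lambda(H)$, and then uses that a commutative Banach algebra containing its identity as a norm-one element coincides isometrically with its own multiplier algebra, so $B_\lambda(H)=MB_\lambda(H)=MA(H)$. If you want to salvage your approach you would either need to prove a Leptin theorem for general commutative hypergroups (nontrivial, and not in this paper), or restrict the statement to discrete $H$ --- but then you would be proving Theorem~\ref{t:multiplier-A(H)} rather than this proposition. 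Your converse direction ($1\in MA(H)=B_\lambda(H)$ forces the trivial character into $\supp(\varpi)$, hence $(P_2)$) is fine and agrees with the paper.
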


\begin{proof}
{As the pre-dual of a commutative von Neumann algebra, $A(H)$ is canonically equipped with its MAX operator space structure, and therefore the projective and operator space projective tensor products coincide (see \cite[p 316]{ruan-effros}). Moreover,
each multiplier $m_u: A(H)\rightarrow A(H)$ is completely bounded and $\norm{u}_{MA(H)}=\norm{u}_{M_{cb}A(H)}$ (see \cite[Section~3.3]{ruan-effros}).}

Since $H$ is commutative, $B_\lambda(H)$ is a Banach algebra and it follows that $MB_\lambda(H)=MA(H)$ \cite[Corollary~4.3]{mu1}.
As $\supp(\varpi)\subseteq B_\lambda(H)$, we see that $1\in B_\lambda(H)$ if and only if $H$ satisfies $(P_2)$. Therefore, $B_\lambda(H)=MA(H)=MB_\lambda(H)$ if and only if $H$ satisfies $(P_2)$.
\end{proof}

Recall that for a discrete regular Fourier hypergroup $H$, $A(H)$ is a regular semisimple Banach algebra whose maximal ideal space is $H$ \cite[Theorem~5.13]{mu1}.

 \begin{thm}\label{t:multiplier-A(H)}
 Let $H$ be a  discrete regular  Fourier hypergroup. Then $MA(H)= B_\lambda(H)$ with equal norms if and only if $H$ satisfies $(P_2)$. Further, if $H$ is a discrete operator Fourier hypergroup satisfying $(P_2)$, then $M_{cb}A(H)=B_\lambda(H)=MA(H)$ with equal norms.
\end{thm}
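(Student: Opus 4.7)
\medskip\noindent\emph{Proof proposal.}
The plan is to first establish the main equivalence $MA(H)=B_\lambda(H)$ isometrically $\Leftrightarrow$ $(P_2)$, and then bootstrap to the completely bounded statement by tensoring with finite groups and invoking Proposition~\ref{p:SU(2)-in-multipliers}. Throughout, the key structural inputs are Theorem~\ref{t:Leptin-thm} (Leptin's analogue for hypergroups) and the fact that, since $H$ is discrete, $\delta_x\in\ell^1(H)\subseteq C^*_\lambda(H)$ for each $x\in H$, so point evaluations are weak-$*$ continuous functionals on $B_\lambda(H)=C^*_\lambda(H)^*$.

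For the $(\Leftarrow)$ direction, I would first observe that every $u\in B_\lambda(H)$ acts as a multiplier of $A(H)$ with $\|m_u\|\leq\|u\|_{B_\lambda(H)}$; indeed, finitely supported elements of $A(H)$ lie in $B_\lambda(H)$ and multiplication by $u$ can be realised via convolution against a coefficient of a $\lambda$-weakly contained representation, so the estimate $\|uv\|_{A(H)}\leq\|u\|_{B_\lambda(H)}\|v\|_{A(H)}$ follows by density (as in the commutative case handled in Proposition~\ref{p:M(A(H))=B-lambda-commutative} via \cite[Corollary~4.3]{mu1}). For the reverse inclusion, invoke Theorem~\ref{t:Leptin-thm} to obtain a contractive approximate identity $(e_\alpha)\subseteq A(H)$ of finitely supported positive definite elements, so that $\|e_\alpha\|_{B_\lambda(H)}=e_\alpha(e)\leq 1$. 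Given $u\in MA(H)$, the net $ue_\alpha\in A(H)$ satisfies $\|ue_\alpha\|_{A(H)}\leq\|u\|_{MA(H)}$, and since $A(H)\hookrightarrow B_\lambda(H)$ contractively, $(ue_\alpha)$ lies in the closed ball of radius $\|u\|_{MA(H)}$ in $B_\lambda(H)$. By Banach--Alaoglu, pass to a weak-$*$ cluster point $\tilde u\in B_\lambda(H)$ with $\|\tilde u\|_{B_\lambda(H)}\leq\|u\|_{MA(H)}$. Testing against $\delta_x\in C^*_\lambda(H)$ shows $\tilde u(x)=\lim_\alpha u(x)e_\alpha(x)=u(x)$ (using $e_\alpha(x)\to 1$, which follows from $e_\alpha v\to v$ in $A(H)$ for $v=\delta_x\in A(H)$), hence $u=\tilde u\in B_\lambda(H)$ with the required norm estimate.

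For $(\Rightarrow)$, suppose $MA(H)=B_\lambda(H)$ isometrically. The identity operator on $A(H)$ is a multiplier of norm $1$, so $1\in MA(H)=B_\lambda(H)$. Thus the trivial representation of $H$ is weakly contained in the left regular representation, which is precisely a restatement of $(P_2)$ via Reiter's criterion in the hypergroup setting; this implication is the same one implicit in Proposition~\ref{p:M(A(H))=B-lambda-commutative} and is valid beyond the commutative case.

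For the completely bounded refinement, assume $H$ is a discrete operator Fourier hypergroup satisfying $(P_2)$, and let $u\in B_\lambda(H)$. For each finite group $G$, the product $H\times G$ is again a discrete regular Fourier hypergroup satisfying $(P_2)$ (as the product of two hypergroups with $(P_2)$), so the first part of the theorem applied to $H\times G$ yields $MA(H\times G)=B_\lambda(H\times G)$ isometrically. Since $B_\lambda(H\times G)\supseteq B_\lambda(H)\otimes B_\lambda(G)$ with $\|u\otimes 1_G\|_{B_\lambda(H\times G)}=\|u\|_{B_\lambda(H)}$, Proposition~\ref{p:SU(2)-in-multipliers} gives
\[
\|u\|_{M_{cb}A(H)}=\sup_G\|u\otimes 1_G\|_{MA(H\times G)}=\sup_G\|u\otimes 1_G\|_{B_\lambda(H\times G)}=\|u\|_{B_\lambda(H)},
\]
and the converse inequality $\|\cdot\|_{MA(H)}\leq\|\cdot\|_{M_{cb}A(H)}$ is automatic. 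The main obstacles I anticipate are (i) verifying cleanly that $B_\lambda(H)\subseteq MA(H)$ contractively in the non-commutative discrete case, which requires care with the density argument above, and (ii) extracting $(P_2)$ from $1\in B_\lambda(H)$ without assuming commutativity; both should nevertheless follow from standard weak-containment arguments transported to the hypergroup setting via the machinery of \cite{mu1,ma5}.
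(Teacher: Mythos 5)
Your proposal contains two genuine gaps, both at points where you import group-theoretic facts that do not transport to hypergroups. The first is the inclusion $B_\lambda(H)\subseteq MA(H)$ with $\norm{m_u}\leq\norm{u}_{B_\lambda(H)}$, which you justify by saying that multiplication by $u$ ``can be realised via convolution against a coefficient of a $\lambda$-weakly contained representation.'' That is the standard group argument resting on Fell's absorption principle (a product of coefficient functions is a coefficient function of a tensor product representation, and $\pi\otimes\lambda$ is quasi-contained in $\lambda$); for hypergroups this fails in general, which is exactly why $B_\lambda(H)$ and $A(H)$ need not even be algebras under pointwise multiplication and why the ``regular Fourier'' hypothesis is imposed. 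The citation of \cite[Corollary~4.3]{mu1} only covers the commutative case. The paper's proof circumvents this entirely: since $A(H)$ is an ideal in its bidual \cite[Lemma~3.2]{ma5} and has a bounded approximate identity by Theorem~\ref{t:Leptin-thm}, it is a BSE-algebra by \cite[Theorem~3.1]{ka-ul}, and the BSE criterion characterizes $u\in MA(H)$ precisely by finiteness of the $B_\lambda(H)$-norm of $u$ tested against finitely supported elements of $C^*_\lambda(H)$; this yields both inclusions simultaneously, and the norm equality then comes from \cite[Corollaries~5 and 6]{BSE} together with the contractivity of the approximate identity. Your cluster-point argument for the other inclusion $MA(H)\subseteq B_\lambda(H)$ with $\norm{u}_{B_\lambda(H)}\leq\norm{u}_{MA(H)}$ is essentially sound and is a reasonable elementary substitute for part of the BSE machinery, but it only gives half of the theorem.

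The second gap is the direction $MA(H)=B_\lambda(H)\Rightarrow(P_2)$, which you dismiss as ``precisely a restatement'' of weak containment of the trivial representation in $\lambda$. The equivalence of $1\in\supp(\varpi)$ with $(P_2)$ is a theorem of \cite{sk} for \emph{commutative} hypergroups, and that is what Proposition~\ref{p:M(A(H))=B-lambda-commutative} uses; it is not available verbatim in the general discrete case. The paper instead gives a real argument: $\chi_0=1$ is a positive multiplicative functional on $C^*_\lambda(H)$, one extends it to a state $E$ of $\VN(H)$, approximates $E$ weak-$*$ by normal states $u_\beta\in A(H)$, uses that $A(H)$ is an ideal in its bidual to show $uu_\beta\to u$ weakly, and extracts a bounded approximate identity by convexity; only then does Theorem~\ref{t:Leptin-thm} deliver $(P_2)$. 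A minor further point: in the completely bounded part you assert that $H\times G$ is a regular Fourier hypergroup ``as the product of two hypergroups with $(P_2)$,'' but stability of the regular Fourier property under products is not known in general; the correct justification is Proposition~\ref{p:product}, which applies because $H$ is assumed to be an \emph{operator} Fourier hypergroup and finite groups are operator Fourier hypergroups. With that citation fixed, your treatment of the $M_{cb}$ statement matches the paper's.
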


\begin{proof}
 If $MA(H)=B_\lambda(H)$, the constant function $1$, which we denoted by $\chi_0$, belongs to $B_\lambda(H)=C^*_\lambda(H)^*$.
Hence, $\chi_0$  defines a multiplicative functional on $L^1(H)$.
By density of $L^1(H)$ in $C^*_\lambda(H)$, for each $g\in C^*_\lambda(H)$, $\langle \chi_0, {\tilde{g}}\cdot_\lambda g\rangle = \langle \chi_0,  \tilde{g} \rangle \langle \chi_0, g\rangle = |\langle \chi_0, g \rangle |^2 \geq 0$.  Hence,  $\chi_0|_{C^*_\lambda(H)}$ is a positive (multiplicative) functional on $C^*_\lambda(H)$ with  $\norm{\chi_0}_{B_\lambda(H)}=1$.

Let $E$ be a state extension of $\chi_0$ to $\VN(H)$. Since the normal states of $\VN(H)$ are weak$^*$ dense in the set of all states of $\VN(H)$, we may find a net of states $(u_\beta)_\beta$ in $A(H)$ such that $u_\beta\rightarrow E$ in  the weak$^*$ topology.

By \cite[Lemma~3.2]{ma5}, $A(H)$ is an ideal in its second dual, so for each $u \in A(H)$, $uE\in A(H)$. Let $T \in \VN(H)$ be arbitrary and $(f_\alpha)_\alpha$ be a net in $L^1(H)$ such that $f_\alpha \rightarrow T$ in the weak$^*$ topology of $\VN(H)$. Then
\[
\lim_\beta \langle uu_\beta, T\rangle = \langle uE, T\rangle =   \lim_\alpha \langle \chi_0 , uf_\alpha \rangle = \lim_\alpha \langle u, f_\alpha\rangle = \langle u, T\rangle.
\]
Therefore, $uu_\beta \rightarrow u$ with respect to the weak topology $\sigma(A(H), \VN(H))$. By the standard convexity argument we may render a  net $(\varphi_\alpha)_\alpha \subseteq \overline{\operatorname{conv}}\{u_\beta\}$
that is a bounded  approximate identity of $A(H)$. Theorem~\ref{t:Leptin-thm} then yields the claim.

\vskip0.5em

Conversely, suppose that $H$ satisfies $(P_2)$.  $A(H)$ is an ideal in its second dual \cite[Lemma~3.2]{ma5} which possesses a bounded approximate identity by Theorem~\ref{t:Leptin-thm}. By \cite[Theorem~3.1]{ka-ul}, any such Banach algebra is a BSE-algebra, i.e., $u\in MA(H)$ if and only if
\[
\sup\left\{ \left| \sum_{i=1}^n \alpha_i u(x_i) \lambda(x_i)\right|:\; x_i\in H,\; \alpha_i\in \mathbb{C}, n\in \mathbb{N}\; \text{ such that }\left\| \sum_{i=1}^n \alpha_i  {x_i}\right\|_{A(H)^*}\leq 1 \right\}< \infty.
\]
To verify the last statement recall that $u(x) = \lambda(x)^{-1} \langle x , u\rangle$ for each $u\in A(H)$ and $x\in H$ where $\langle \cdot, \cdot\rangle$ denotes the dual product of $(A(H), \VN(H))$.
 But $\norm{f}_{A(H)^*}=\norm{f}_{C^*_\lambda(H)}$ for each $f\in C_c(H)$. Hence, $u\in B_\lambda(H)=C^*_\lambda(H)^*$ and subsequently by \cite[Lemma~1]{BSE}, $(B_\lambda(H), \norm{\cdot}_{B_\lambda(H)})$ is a semisimple Banach algebra.   Further, \cite[Corollary~6]{BSE} implies that $\norm{\cdot}_{MA(H)}$ and $\norm{\cdot}_{B_\lambda(H)}$ are equivalent, i.e. there are constants $c,d$ so that $c \norm{\cdot}_{MA(H)}\leq \norm{\cdot}_{B_\lambda(H)} \leq d  \norm{\cdot}_{MA(H)}$.
  Recall that by Theorem~\ref{t:Leptin-thm}, $A(H)$ has a contractive approximate identity. Hence, the argument in the proof of \cite[Corollary~5]{BSE} implies that $d =1$. To show that $c=1$, recall that $B_\lambda(H)$ is a Banach algebra; hence, $\norm{u}_{MB_\lambda(H)} \leq \norm{u}_{B_\lambda(H)}$ for every $u \in B_\lambda(H)$. Since $A(H)$ is a closed subalgebra of $B_\lambda(H)$, we get
\[
\norm{u}_{MA(H)}   \leq  \norm{u}_{MB_\lambda(H)} \leq \norm{u}_{B_\lambda(H)}
\]
for every $u \in B_\lambda(H)$. Therefore, $\norm{\cdot}_{MA(H)}= \norm{\cdot}_{B_\lambda(H)}$.

\vskip1.0em

Suppose that $H$ is an operator Fourier hypergroup satisfying $(P_2)$.
Note that by Proposition~\ref{p:product}, for every finite group $G$, the discrete hypergroup $H \times G$ is an operator Fourier hypergroup satisfying $(P_2)$.  So the first part of the proof holds for $H\times G$ and therefore $MA(H \times G) = B_\lambda(H \times G)$ with equal norms.

For each $u \in B_\lambda(H)$, note that $u\otimes 1_{G}$ is a function in $MA(H \times G)$, where $1_G$ is the constant function on $G$.
Then $\norm{u\times 1_{G}}_{MA(H \times G)} =  \norm{u\times 1_{G}}_{B_\lambda(H \times G)}= \norm{u}_{B_\lambda(H)}$. Thus, by  Proposition~\ref{p:SU(2)-in-multipliers} $ \norm{u}_{B_\lambda(H)} = \norm{u}_{M_{cb}A(H)}$. Therefore, $MA(H) = M_{cb}A(H)$ with equal norms.
\end{proof}

Here, we showed equality between $MA(H)$ and $B_\lambda(H)$ for commutative/discrete hypergroups satisfying $(P_2)$. A question of interest is  the existence of a similar relation for $B(H)$. In other words, it would be interesting to find a hypergroup satisfying $(P_2)$ while  $B_\lambda(H) \neq B(H)$.
In \cite{geb}, it is claimed (without a proof) that this is not the case for some classes of discrete hypergroups with controlled growing rates. We are also aware that such a hypergroup cannot be admitted by the fusion rules of  compact groups (see Subsection~\ref{ss:compact-groups})  or compact quantum group of Kac type (see Section~\ref{s:CQG}).

\subsection{Weak amenability of commutative hypergroups}\label{ss:ai-A(H)}

The following definition is motivated by the group setting \cite{haa3}.

\begin{dfn}\label{d:WA}
 An operator Fourier hypergroup $H$ is said to be \ma{weakly amenable with constant $C>0$} if there exists a net $(u_\alpha)_\alpha \subseteq  A(H) \cap C_c(H)$  such that $\norm{u_\alpha}_{M_{cb}A(H)} \leq  C$ and $uu_\alpha  \rightarrow u$ in $A(H)$.
 \end{dfn}

We now state the main result of this section.

\begin{thm}\label{t:commutative-WA}
 {  Let $H$ be a commutative  discrete  regular Fourier hypergroup. Then $H$ is weakly amenable with constant $1$.}
\end{thm}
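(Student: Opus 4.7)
The plan is to work via the Plancherel identifications for commutative hypergroups. By \cite{mu1}, one has $A(H)\cong L^1(\supp(\varpi),\varpi)$ and $B_\lambda(H)\cong M(\supp(\varpi))$ isometrically, and by Proposition~\ref{p:M(A(H))=B-lambda-commutative}, $M_{cb}A(H)=MA(H)$ with $B_\lambda(H)$ embedding contractively into $MA(H)$.

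First I would reduce the theorem to a pointwise convergence statement: it suffices to exhibit a net $(u_F)$ in $A(H)\cap C_c(H)$, indexed by finite subsets $F$ of $H$ ordered by inclusion, with $\|u_F\|_{M_{cb}A(H)}\le 1$ and $u_F(x)\to 1$ for every $x\in H$. Indeed, the multiplication operators $\{m_{u_F}\}$ are then equicontinuous on $A(H)$, so by density of $C_c(H)$ in $A(H)$ it is enough to verify $uu_F\to u$ in $A(H)$ for finitely supported $u$. For such $u$, all products $uu_F$ lie in the finite-dimensional subspace of functions supported in $\supp(u)$, on which pointwise convergence and norm convergence coincide, and pointwise convergence $u_F\to 1$ on the finite set $\supp(u)$ forces $uu_F\to u$ in $A(H)$.

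I would then build the $u_F$ from positive-definite compactly supported functions. For each finite symmetric $F\ni e$, set $\xi_F=\lambda(F)^{-1/2}\mathbf{1}_F\in L^2(H)$ and $u_F=\xi_F\cdot_\lambda\widetilde{\xi_F}$. Since $\|\xi_F\|_2=1$, the element $u_F$ is positive definite and lies in $A(H)\cap C_c(H)$ with $u_F(e)=1$, so $\|u_F\|_{B_\lambda(H)}=u_F(e)=1$ and therefore $\|u_F\|_{M_{cb}A(H)}\le 1$ via the contractive inclusion above. The Plancherel formula gives the spectral representation
\[
u_F(x)=\int_{\supp(\varpi)}\chi(x)\,|\widehat{\xi_F}(\chi)|^2\,d\varpi(\chi).
\]

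The hard part will be establishing $u_F(x)\to 1$ pointwise for every $x\in H$, particularly when $H$ fails $(P_2)$ and no F\o lner-type net is available. Here I would exploit commutativity via the orthogonality relations $\int\chi(x)\overline{\chi(y)}\,d\varpi(\chi)=\delta_{x,y}/\lambda(x)$, which yield an explicit expansion of $u_F(x)$ in terms of the character coefficients of $\mathbf{1}_F$. If the indicator-based choice $\xi_F=\lambda(F)^{-1/2}\mathbf{1}_F$ does not close the argument outside the $(P_2)$ case, the construction should be refined by replacing $|\widehat{\xi_F}|^2$ with a more judiciously chosen family of nonnegative trigonometric polynomials on the compact space $\supp(\varpi)$, adapted to the given finite set of points in $H$ and whose inverse Fourier transforms remain in $C_c(H)$ with $B_\lambda(H)$-norm at most $1$. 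This spectral construction, which has no analogue in the noncommutative setting, is the technical heart of the proof.
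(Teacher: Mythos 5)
Your approach has a genuine, structural gap: the norm control you propose cannot work outside the $(P_2)$ case, which is precisely the case the theorem is designed to cover. You bound $\norm{u_F}_{M_{cb}A(H)}$ by first showing $\norm{u_F}_{B_\lambda(H)}=u_F(e)=1$ and then using the contractive inclusion $B_\lambda(H)\subseteq MA(H)$. But if a net $(u_F)$ lies in the unit ball of $B_\lambda(H)=C^*_\lambda(H)^*$ and converges pointwise to $1$ on the discrete hypergroup $H$, then (since $\lambda(c_c(H))$ is dense in $C^*_\lambda(H)$ and the net is uniformly bounded) it converges weak$^*$ to the constant character, forcing $1\in B_\lambda(H)$ --- which by \cite{sk} is \emph{equivalent} to $H$ satisfying $(P_2)$. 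Equivalently, with $u_F=\xi_F\cdot_\lambda\tilde{\xi}_F$ and $\norm{\xi_F}_2=1$, the identity $\norm{\lambda(x)\xi_F-\xi_F}_2^2\leq 2-2\,\mathrm{Re}\,u_F(x)$ shows that $u_F(x)\to 1$ pointwise would make $(\xi_F)$ a $(P_2)$-net. Since commutative discrete regular Fourier hypergroups failing $(P_2)$ exist, no choice of normalized positive-definite $u_F$ (nor of "nonnegative trigonometric polynomials on $\supp(\varpi)$ of $B_\lambda(H)$-norm at most $1$," which are subject to the same obstruction) can close the argument. The part of your proposal you flag as "the technical heart" is in fact impossible as stated.

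The idea you are missing is that the $MA(H)$-norm is in general \emph{strictly} weaker than the $B_\lambda(H)$-norm, and one must produce elements that are multiplier-contractive without being $B_\lambda(H)$-contractive. The paper does this via Voit's positive character $\chi_0\in\supp(\varpi)$ dominating all of $\supp(\varpi)$: the deformed hypergroup $H_0$ (convolution renormalized by $\chi_0$) automatically satisfies $(P_2)$ because $1=\chi_0/\chi_0$ lies in the support of its Plancherel measure, so Theorem~\ref{t:Leptin-thm} yields a contractive approximate identity $(e_\alpha)$ for $A(H_0)$ consisting of finitely supported functions. The isometries $A(H)\cong A(H_0)$ and $MA(H)=MA(H_0)=B_\lambda(H_0)$ (Lemma~\ref{l:Fourier-of-I0}, Proposition~\ref{p:I-0-regular}, Remark~\ref{r:MA(H)=B(I0)}) then give $\norm{e_\alpha}_{M_{cb}A(H)}\leq 1$ and $ue_\alpha\to u$ in $A(H)$. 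Crucially, these $e_\alpha$ are positive definite on $H_0$ but \emph{not} on $H$ (Remark~\ref{r:not-Haagerup}), which is exactly why they evade the obstruction above. Your reduction to pointwise convergence on finite sets is fine and essentially standard, but without the passage to $H_0$ the construction of the net itself fails.
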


To prove this theorem we use a technique due to Voit \cite{voit} on generating new hypergroups using positive characters. In \cite{voit} it was proved that for every commutative hypergroup $H$, there is a unique positive character $\chi_0$ in the support of the Plancherel measure satisfying $\sup_{\chi \in \supp(\varpi)}  |\chi(x)| \leq \chi_0(x)$
for every $ x \in H$.  One then defines a new hypergroup convolution on $H$, denoted here by $\circ$, via
\[
  x  \circ y := \frac{ \chi_0}{\chi_0( x \cdot y)} x \cdot y
\]
for every pair $x,y \in H$. We use $H_0$ to denote the later hypergroup structure on $H$ whose Haar measure is $\lambda'=\chi_0^2 \lambda$. In this case, it follows that $\widehat{H}_0$ is homeomorphic to the set $\{ \chi \in \widehat{H}: |\chi(x)| \leq \chi_0(x)\ \ \forall x\in H\}$ through the mapping $\chi \mapsto \chi/\chi_0$.  Note that  $H$ is discrete if and only $H_0$ is discrete.

\begin{lem}\label{l:Fourier-of-I0}
Let $H$ be a   commutative   hypergroup. Then $A(H)$ is isometrically Banach space isomorphic to $A(H_0)$ through the mapping $u \mapsto u/ \chi_0$.  Moreover, $u \mapsto u/\chi_0$ also yields an isometric Banach space isomorphism from $B_\lambda(H)$ onto $B_\lambda(H_0)$.
\end{lem}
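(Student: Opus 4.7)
\emph{Proof sketch.} The plan is to exploit Voit's explicit description of $H_0$ (rescaled convolution and weighted Haar measure) to transfer the Fourier and Fourier--Stieltjes descriptions between $H$ and $H_0$.

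\emph{Step 1: Key identities.} First I would record the basic calculus of the Voit construction. Since $\chi_0$ is positive, $\chi_0(\tilde{x})=\chi_0(x)$ and $\chi_0(x\cdot y)=\chi_0(x)\chi_0(y)$. Using $\lambda'=\chi_0^2\lambda$ together with $x\circ y = \chi_0(x)^{-1}\chi_0(y)^{-1}\chi_0\cdot(x\cdot y)$, a direct change-of-variable computation gives the fundamental relation
\[
\bigl(f\cdot_{\lambda'} g\bigr)(x)\,\chi_0(x) \;=\; \bigl((f\chi_0)\cdot_{\lambda}(g\chi_0)\bigr)(x),
\]
valid for $f,g$ in any reasonable class. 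Similarly the map $f\mapsto f\chi_0^{2/p}$ is an isometric bijection $L^p(H_0)\to L^p(H)$, and a short check using $\chi_0(\tilde{x})=\chi_0(x)$ shows that the involution is intertwined: $\widetilde{(g\chi_0)}=\tilde{g}\,\chi_0$.

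\emph{Step 2: Isometry on $A(H)$.} Since $H$ is commutative, both $H$ and $H_0$ are unimodular, so by \cite[Proposition~1.3]{ma5} the norm on $A(H)$ is given by $\|u\|_{A(H)}=\inf\{\|\xi\|_2\|\eta\|_2: u=\xi\cdot_\lambda\tilde{\eta},\ \xi,\eta\in L^2(H)\}$, and analogously for $A(H_0)$. Given $u=\xi\cdot_\lambda\tilde{\eta}\in A(H)$, setting $f=\xi/\chi_0$ and $g=\eta/\chi_0$ gives $f,g\in L^2(H_0)$ with the same $L^2$-norms, and combining Step~1 with the involution identity yields
\[
u = (f\chi_0)\cdot_\lambda\widetilde{(g\chi_0)} = \bigl(f\cdot_{\lambda'}\tilde{g}\bigr)\chi_0,
\]
so $u/\chi_0=f\cdot_{\lambda'}\tilde{g}\in A(H_0)$ with $\|u/\chi_0\|_{A(H_0)}\leq \|\xi\|_2\|\eta\|_2$. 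Taking the infimum gives $\|u/\chi_0\|_{A(H_0)}\leq \|u\|_{A(H)}$, and the reverse inequality is obtained symmetrically by starting from $v\in A(H_0)$ and multiplying by $\chi_0$. Hence $u\mapsto u/\chi_0$ is an isometric Banach space isomorphism $A(H)\to A(H_0)$.

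\emph{Step 3: Isometry on $B_\lambda(H)$.} For a commutative hypergroup, the Fourier transform identifies $C_\lambda^*(H)$ with $C_0(\supp(\varpi))$, and duality then identifies $B_\lambda(H)$ isometrically with $M(\supp(\varpi))$ via $\mu\mapsto \widehat{\mu}(x)=\int\chi(x)\,d\mu(\chi)$; the analogous statement holds for $H_0$ with Plancherel measure $\varpi_0$. By the Voit theory, the map $\chi\mapsto\chi/\chi_0$ is a homeomorphism from $\supp(\varpi)$ onto $\supp(\varpi_0)$. Pushing forward $\mu$ under this homeomorphism yields $\mu'\in M(\supp(\varpi_0))$ with $\|\mu'\|=\|\mu\|$, and the computation $\int\chi'(x)\,d\mu'(\chi') = \int(\chi/\chi_0)(x)\,d\mu(\chi) = u(x)/\chi_0(x)$ identifies the image as $u/\chi_0\in B_\lambda(H_0)$ with matching norms.

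\emph{Main obstacle.} The computations are essentially routine bookkeeping; the only delicate point is verifying the compatibility of the involution with the rescaling (i.e.\ $\widetilde{(g\chi_0)}=\tilde{g}\chi_0$), which hinges on the positivity of $\chi_0$. For the $B_\lambda$ statement, I am relying on the transport of Plancherel supports under Voit's homeomorphism; a self-contained verification of that fact (rather than invoking \cite{voit}) is the point that would require the most care if one wanted to spell out all details.
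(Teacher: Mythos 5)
Your Steps 1 and 2 are essentially the paper's argument: the authors likewise invoke Voit's $L^2(H)\cong L^2(H_0)$, $\xi\mapsto\xi/\chi_0$, compute directly that $(\xi\circ_{\lambda'}\eta)\,\chi_0=(\chi_0\xi)\cdot_\lambda(\chi_0\eta)$ (your ``fundamental relation''), and conclude via the infimum formula for the $A(H)$-norm; your observation that the positivity of $\chi_0$ makes the involution compatible with the rescaling is exactly the point that makes this work. Where you genuinely diverge is Step 3. The paper stays on the ``time side'': it uses Voit's identity $\norm{\phi}_{C^*_\lambda(H_0)}=\norm{\chi_0\phi}_{C^*_\lambda(H)}$ and computes $\norm{u/\chi_0}_{B_\lambda(H_0)}$ as a supremum of pairings $\int_H (u/\chi_0)\phi\,\chi_0^2\,d\lambda=\int_H u(\chi_0\phi)\,d\lambda$ over $\phi\in C_c(H)$ in the unit ball of $C^*_\lambda(H_0)$, which collapses immediately to $\norm{u}_{B_\lambda(H)}$. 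You instead pass to the Gelfand picture $C^*_\lambda(H)\cong C_0(\supp\varpi)$, $B_\lambda(H)\cong M(\supp\varpi)$, and push measures forward under Voit's homeomorphism $\supp\varpi\to\supp\varpi_0$, $\chi\mapsto\chi/\chi_0$. Both routes are valid; the paper's is shorter and needs only the single norm identity from Voit, whereas yours requires two additional (true, but not free) inputs that you should justify or cite: that the Gelfand spectrum of $C^*_\lambda(H)$ is exactly $\supp\varpi$ (Plancherel plus Stone--Weierstrass, so that $B_\lambda(H)$ is isometrically $M(\supp\varpi)$), and Voit's Proposition~2.5 identifying the Plancherel supports. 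In compensation, your version makes the structure of $B_\lambda$ as a measure algebra on the reduced dual explicit, which the paper's duality computation hides. Either way the result is correctly established.
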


\begin{proof}
\cite[Proposition~2.6]{voit} proves that  $L^2(H)$ is isometrically  isomorphic to $L^2(H_0)$ through $\xi \mapsto \xi/\chi_0$. Recall that $A(H)= L^2(H) \cdot_\lambda L^2(H)$ and $A(H_0)=L^2(H_0) \circ_{\lambda'}  L^2(H_0)$. For each pair $\xi  , \eta$ in $L^2(H_0)$,  note that
\begin{eqnarray*}
\xi \circ_{\lambda'}  \eta(z) &=& \int_H \xi(x) \eta(\tilde{x} \circ z) \chi_0(x)^2 dx\\
&=& \int_H \xi(x) \int_H \frac{\eta(y) \chi_0(y)}{\chi_0(\tilde{x}\cdot z)} d(\tilde{x}\cdot z)(y) \chi_0(x)^2  dx\\
&=& \int_H \xi(x) \frac{(\chi_0 \eta)(\tilde{x} \cdot z)}{\chi_0(\tilde{x}) \chi_0(z)} \chi_0(x)^2 dx\\
&=& \frac{1}{\chi_0(z)} \int_H (\chi_0 \xi)(x) (\chi_0 \eta)(\tilde{x}\cdot z) dx = \frac{(\chi_0 \xi) \cdot_\lambda (\chi_0 \eta) }{\chi_0}(z).
\end{eqnarray*}
Hence,  $u \mapsto u/\chi_0$ is  a  mapping from $A(H)=L^2(H) \cdot_\lambda L^2(H)$ onto $A(H_0)=L^2(H_0) \circ_{\lambda'}  L^2(H_0)$ satisfying
\begin{eqnarray*}
\norm{u/\chi_0}_{A(H_0)} &=& \inf\{ \norm{\xi }_{L^2(H_0)} \norm{\eta }_{L^2(H_0)} : \ \ \  {u}/{\chi_0}=\xi\circ_{\lambda'}  \eta, \xi,\eta\in L^2(H_0)\}\\
&=& \inf\{ \norm{\xi }_{L^2(H)} \norm{\eta }_{L^2(H)}: \ \ \ u=\xi\cdot_\lambda  \eta, \xi,\eta\in L^2(H)\}\\
&=& \norm{u}_{A(H)}.
\end{eqnarray*}

To prove that every $u \in B_\lambda(H)$ is mapped to $u/\chi_0 \in B_\lambda(H_0)$, we use the fact that $\norm{\phi}_{C^*_\lambda(H_0)} = \norm{\chi_0 \phi}_{C^*_\lambda(H)}$ (see \cite[Proposition~2.6]{voit}). Indeed,
\begin{eqnarray*}
\norm{u/\chi_0}_{B_\lambda(H_0)} &=& \sup\left\{ \left|\int_H \frac{u(x)}{\chi_0(x)} \phi(x) \chi_0^2(x) dx\right| : \phi \in C_c(H),\ \norm{\phi}_{C^*_\lambda(H_0)}\leq 1 \right\}\\
&=& \sup\left\{ \left| \int_H  {u(x)}  \phi(x)   dx\right|  : \phi \in C_c(H),\ \norm{\phi}_{C^*_\lambda(H)}\leq 1 \right\}\\
&=& \norm{u}_{B_\lambda(H)}.
\end{eqnarray*}
\end{proof}

\begin{rem}\label{r:B-I-0=MA(H)}
Let $H$ be a commutative regular Fourier hypergroup.
\begin{itemize}
\item[1.]{For each $u \in A(H)$, $\chi_0 u \in A(H)$ ($\chi_0 \in B_\lambda(H) \subseteq MA(H)$)  and therefore, $u = \chi_0 u /\chi_0 \in A(H_0)$ by Lemma~\ref{l:Fourier-of-I0}. Therefore, $A(H)$ is a subset of $A(H_0)$. Indeed,
\[
\norm{u}_{A(H_0)} = \norm{u\chi_0 }_{A(H)} \leq \norm{\chi_0}_{B_\lambda(H)} \norm{u}_{A(H)} \leq \norm{u}_{A(H)}
\]
 for every $u \in A(H)$. }
 \item[2.]{$C_c(H) \cap A(H) =  C_c(H) \cap A(H_0)$ and it is  a dense subset in both of $A(H)$ and $A(H_0)$. The inclusion $C_c(H) \cap A(H) \subseteq C_c(H) \cap A(H_0)$ follows by the first part of the remark. To prove the converse, suppose that $u \in C_c(H) \cap A(H_0)$. Based on  Lemma~\ref{l:Fourier-of-I0}, there is some $v \in C_c(H) \cap A(H)$ so that $u= v/\chi_0$. But since $|\chi_0(x)|\geq \delta>0$ for some $\delta>0$ and every $x\in \supp(v)$, by Theorem~3.6.15 and Theorem~3.7.1 of \cite{ric}, there is some $\phi \in MA(H)$ so that $\phi(x)=\chi_0(x)^{-1}$ for each $x\in \supp(v)$. Hence, $u = v/\chi_0 = v \phi \in A(H)$.}
 \item[3.]{For each $\phi \in MA(H)$, $\phi$ also belongs to $MA(H_0)$. To prove this claim note that
 \[
 \norm{\phi u}_{A(H_0)} = \norm{\phi \chi_0 u}_{A(H)} \leq \norm{\phi}_{MA(H)} \norm{\chi_0 u}_{A(H)} = \norm{\phi}_{MA(H)} \norm{u}_{A(H_0)}
 \]
 for every $\phi \in MA(H)$ and $u \in C_c(H) \cap A(H)$. Conversely, for $\phi \in MA(H_0)$ and $u \in C_c(H) \cap A(H)$, we get
 \[
 \norm{\phi u}_{A(H)} = \norm{\phi \frac{u}{\chi_0}}_{A(H_0)} \leq \norm{\phi}_{MA(H_0)} \norm{\frac{u}{\chi_0}}_{A(H_0)}= \norm{\phi}_{MA(H_0)} \norm{ {u} }_{A(H)}.
 \] Therefore, two approximation arguments imply that $MA(H)  = MA(H_0) $ and moreover for every  $\phi \in MA(H_0) $ we have $\norm{\phi}_{MA(H_0)} =  \norm{\phi}_{MA(H)}$.}
 \item[4.]{By \cite[Remark~3.5]{mu1}, we know that every multiplier of $A(H_0)$ is a multiplier of $B_\lambda(H_0)$. But the constant function $1 \in B_\lambda(H_0)$; thus,
 \[
 B_\lambda(H) \subset MA(H) = MA(H_0) \subseteq B_\lambda(H_0).
 \]
 Similar to the argument in the first part of the remark, we get
$ \norm{u}_{B_\lambda(H_0)} \leq  \norm{u}_{B_\lambda(H)}$
 for every $u \in B_\lambda(H)$.}
\end{itemize}
 \end{rem}

\begin{proposition}\label{p:I-0-regular}
Let $H$ be a  commutative regular Fourier  hypergroup. Then $H_0$ is also a regular Fourier hypergroup.
\end{proposition}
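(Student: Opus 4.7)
The approach is to reduce the problem to showing that $A(H_0)$ is closed under pointwise multiplication, and then inherit both the product and the norm bound from a larger ambient Banach algebra.

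First I would observe that $H_0$ is commutative: the defining formula $x\circ y=\chi_0(x\cdot y)^{-1}\chi_0\, x\cdot y$ is manifestly symmetric in $x,y$ when $\cdot$ is. Invoking \cite[Corollary~4.3]{mu1} (as in the proof of Proposition~\ref{p:M(A(H))=B-lambda-commutative}), commutativity forces $B_\lambda(H_0)$ to be a Banach algebra under pointwise multiplication. Since $A(H_0)$ is, by definition, a closed subspace of $B_\lambda(H_0)$, the norm on $A(H_0)$ is the restriction of the $B_\lambda(H_0)$-norm; hence once we know $A(H_0)$ is closed under pointwise multiplication, the product estimate
\[
\|uv\|_{A(H_0)}\le\|u\|_{A(H_0)}\|v\|_{A(H_0)}
\]
is immediate from the Banach algebra bound in $B_\lambda(H_0)$.

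To establish closure under multiplication I would first treat compactly supported elements. For $u,v\in C_c(H)\cap A(H_0)$, Remark~\ref{r:B-I-0=MA(H)}(2) identifies this intersection with $C_c(H)\cap A(H)$, placing $u,v$ in $A(H)$. Since $H$ is regular Fourier, $A(H)$ is a Banach algebra, so $uv\in A(H)$; as $uv$ still has compact support, $uv\in C_c(H)\cap A(H)=C_c(H)\cap A(H_0)\subseteq A(H_0)$. Then I extend by density: pick nets $(u_\alpha),(v_\alpha)\subseteq C_c(H)\cap A(H_0)$ converging to $u,v$ in the $A(H_0)$-norm, which is available by the density half of Remark~\ref{r:B-I-0=MA(H)}(2). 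By the previous step each $u_\alpha v_\alpha\in A(H_0)$, and the Banach algebra property of $B_\lambda(H_0)$ gives $u_\alpha v_\alpha\to uv$ in the $B_\lambda(H_0)$-norm. Closedness of $A(H_0)$ inside $B_\lambda(H_0)$ then forces $uv\in A(H_0)$.

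The main obstacle is the potential mismatch between the $A(H)$- and $A(H_0)$-norms when extending by density: the inclusion $A(H)\subseteq A(H_0)$ from Remark~\ref{r:B-I-0=MA(H)}(1) only delivers $\|\cdot\|_{A(H_0)}\le\|\cdot\|_{A(H)}$, with the reverse inequality generally false, so one cannot transport product bounds from $A(H)$ to $A(H_0)$ directly in the limit. The trick is to avoid working inside $A(H)$ at the limit stage and instead let the Banach algebra structure on the larger space $B_\lambda(H_0)$ carry the convergence, using closedness of $A(H_0)$ in $B_\lambda(H_0)$ to place the limit back into $A(H_0)$ at the end.
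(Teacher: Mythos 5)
Your argument is circular at its foundation. The claim that commutativity of $H_0$ alone forces $B_\lambda(H_0)$ to be a Banach algebra under pointwise multiplication is not what \cite[Corollary~4.3]{mu1} gives you: in the proof of Proposition~\ref{p:M(A(H))=B-lambda-commutative} that corollary is invoked for a commutative \emph{regular Fourier} hypergroup, and the preliminaries explicitly warn that for a general hypergroup --- commutative or not --- $B(H)$, $B_\lambda(H)$ and $A(H)$ need not form Banach algebras under pointwise multiplication. Indeed, if commutativity alone sufficed, then products of characters in $\supp(\varpi')$ would automatically lie in $B_\lambda(H_0)$ with norm at most $1$ and every commutative hypergroup would be regular Fourier, which is false. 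Since you use the Banach algebra structure of $B_\lambda(H_0)$ both for the submultiplicative estimate and to get $u_\alpha v_\alpha\to uv$ in the density step, you are assuming for $H_0$ precisely the kind of property the proposition is meant to establish.

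The compactly supported case is fine as far as it goes ($uv\in C_c(H)\cap A(H)=C_c(H)\cap A(H_0)$ by Remark~\ref{r:B-I-0=MA(H)}(2)), but without a bound of the form $\norm{uv}_{A(H_0)}\leq C\norm{u}_{A(H_0)}\norm{v}_{A(H_0)}$ on that dense subspace the passage to limits cannot be completed, and such a bound is essentially the content of the proposition. The paper instead invokes Muruganandam's criterion: a commutative hypergroup is regular Fourier if and only if the product of any two characters in the support of its Plancherel measure lies in $B_\lambda$ with norm at most $1$. Via Lemma~\ref{l:Fourier-of-I0} and Voit's identification of $\supp(\varpi')$, this reduces to showing $\chi_1\chi_2/\chi_0\in B_\lambda(H)$ with $\norm{\chi_1\chi_2/\chi_0}_{B_\lambda(H)}\leq 1$ for $\chi_1,\chi_2\in\supp(\varpi)$, which is proved by the operator estimate $\norm{(\chi_1/\chi_0)f}_{C^*_\lambda(H)}\leq\norm{f}_{C^*_\lambda(H)}$ together with a duality argument. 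Your proposal would need some substitute for this quantitative step.
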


\begin{proof}
Let $\varpi'$ denote the Plancherel measure of $H_0$. To prove this theorem, we use a characterization of  regular  Fourier hypergroups developed in \cite{mu1} which states that the hypergroup $H_0$ is regular  Fourier if and only for every pair $\chi'_1,\chi'_2 \in \supp(\varpi')$, $\chi'_1\chi'_2 \in B_\lambda(H_0)$ and $\norm{\chi'_1\chi'_2}_{B_\lambda(H_0)}\leq 1$.
By Lemma~\ref{l:Fourier-of-I0} and \cite[Proposition~2.5]{voit}, it suffices to show that for each pair $\chi_1, \chi_2 \in \supp(\varpi)$, $\chi_1\chi_2 /\chi_0 \in B_\lambda(H)$ and $\norm{\chi_1\chi_2/\chi_0}_{B_\lambda(H)}\leq 1$.

Let $f\in C_c(H)$. Recall that for $\chi_1 \in \supp(\varpi)$, $\norm{\chi_1/\chi_0}_\infty \leq 1$ (see \cite{voit}). Hence,   we have
\begin{eqnarray*}
\norm{\frac{\chi_1}{\chi_0} f}_{C^*_\lambda(H)} = \norm{\frac{\chi_1}{\chi_0} \frac{f}{\chi_0}}_{C^*_\lambda(H_0)} \leq  \norm{\frac{\chi_1}{\chi_0}}_{B_\lambda(H_0)} \norm{\frac{f}{\chi_0}}_{C^*_\lambda(H_0)}  = \norm{\chi_1}_{B_\lambda(H)} \norm{f}_{C^*_\lambda(H)}.
\end{eqnarray*}
Therefore, $\chi_1/\chi_0$ can be extended to a continuous operator on $C^*_\lambda(H)$ with $\norm{\chi_1/\chi_0}_{B(C^*_\lambda(H))}\leq \norm{\chi_1}_{B_\lambda(H)}=1$. Using this bounded operator,  for $\chi_2 \in \supp(\varpi)$, we define $\chi_1 \chi_2 / \chi_0 \in B_\lambda(H)$ by
\[
\langle \frac{\chi_1 \chi_2}{\chi_0}, f\rangle   := \langle \chi_2, \frac{\chi_1}{\chi_0} f\rangle  \ \ \ \ \ \ \ (f\in C_\lambda^*(H)),
\]
where $\langle\cdot,\cdot\rangle$ denotes the duality product of $(B_\lambda(H),C^*_\lambda(H))$. Then
\[
\norm{\chi_1 \chi_2/\chi_0}_{B_\lambda(H)} \leq \norm{\chi_1/\chi_0}_{B(C^*_\lambda(H))} \norm{\chi_2}_{B_\lambda(H)} \leq 1.
\]
This finishes the proof.

\end{proof}

\begin{rem}\label{r:MA(H)=B(I0)}
In the proof of Proposition~\ref{p:I-0-regular}, we actually proved that $(MA(H), \norm{\cdot}_{MA(H)})$ is isometrically Banach algebra isomorphic to $(B_\lambda(H_0), \norm{\cdot}_{B_\lambda(H_0)})$. Indeed, $A(H_0)$ is the closure of $A(H)$ in its (completely bounded) multiplier norm.  This Banach algebra was formerly introduced and studied for locally compact groups in \cite{br2, br1, br3}.
\end{rem}

Note that in neither of the results we proved above we did not assume that $H$ is discrete.

\vskip1.5em

\noindent{\it Proof of Theorem~\ref{t:commutative-WA}.}
First note that by Proposition~\ref{p:I-0-regular}, $H_0$ is a  discrete regular Fourier hypergroup satisfying $(P_2)$ (as $1$ belongs to the support of the Plancherel measure). Therefore, by Theorem~\ref{t:Leptin-thm}, $A(H_0)$ has a contractive approximate identity   which lies in $c_c(H) \subseteq A(H)$.
For each $u \in A(H)$, we get
\[
\norm{ue_\alpha - u}_{A(H)} = \lim_{\alpha} \norm{\frac{u}{\chi_0} e_\alpha - \frac{u}{\chi_0}}_{A(H_0)} =0
\]
as $u/\chi_0 \in A(H_0)$.  Note that $\norm{\cdot}_{MA(H)} = \norm{\cdot}_{MA(H_0)}$, by Remark~\ref{r:MA(H)=B(I0)}, while, $\norm{\cdot}_{MA(H_0)} = \norm{\cdot}_{A(H_0)}$. Therefore, $\{e_\alpha\}_\alpha$ is norm bounded in  $\norm{\cdot}_{MA(H)}=\norm{\cdot}_{M_{cb}A(H)}$ by $1$ ( Proposition~\ref{p:M(A(H))=B-lambda-commutative}).
\hfill $\Box$

\vskip2.0em

\begin{rem}\label{r:not-Haagerup}
Note that in the proof of Theorem~\ref{t:commutative-WA}, the bounded approximate identity $(e_\alpha)_\alpha$ of $A(H_0)$ can be formed as a net of compactly  positive definite functions   $e_\alpha=\xi_\alpha \circ_{\lambda'} \tilde{\xi}_\alpha$ on $H_0$ for unit vectors $\xi_\alpha$ in $C_c(H) \subseteq L^2(H_0)$. However, the $e_\alpha = ((\chi_0 \xi_\alpha) \cdot_\lambda (\chi_0 \xi_\alpha\tilde{)})/\chi_0$ are not necessarily positive definite as functions on $H$. In other words, the proof of the theorem does not imply the existence a hypergroup version of Haagerup property for commutative hypergroups.
\end{rem}

 \begin{rem}\label{p:MA(H)-dual-Q(H)}
For a commutative hypergroup $H$, one can apply  a hypergroup adaptation of \cite[Proposition~1.10]{haa} to show that $M_{cb}A(H)=MA(H)$ is the dual of a Banach space. In this case the weak amenability of commutative hypergroups implies the existence of  a net $\{e_\alpha\}$ in $A(H)$ so that $e_\alpha \rightarrow 1$ in the weak$^*$  topology of $MA(H)$. This notion can be interpreted as a hypergroup analogue of the \ma{approximation property}.
  \end{rem}

\section{Amenability of the Fourier algebra of discrete hypergroups}\label{s:AM-A(H)}

In this section $H$ is a discrete  commutative regular Fourier hypergroup with normalized Haar measure $\lambda$ i.e. $\lambda(e)=1$.
   The following lemma  is a hypergroup analogue of \cite[Theorem~1.16]{AzSaSp}. Here, we use $1_E$ to denote the characteristic function of set a $E$.
    For the definition of \emph{operator amenability}, see \cite[Section~16.2]{ruan-effros}, for instance. Note that if $H$ is a commutative regular Fourier hypergroup, operator amenability of $A(H)$ corresponds to amenability, by an argument similar to the one in the proof of Proposition~\ref{p:M(A(H))=B-lambda-commutative}.

\begin{lem}\label{l:1diagonal-in-MA(HxH)}
Let $H$ be a discrete operator Fourier hypergroup and $\Delta = \{(x,x): x \in H\}$. Then the following conditions are equivalent.
\begin{itemize}
\item[$(i)$]{$A(H)$ is operator amenable.}
\item[$(ii)$]{$1_{\Delta}$ belongs to $MA(H \times H)$ and $H$ satisfies $(P_2)$.}
\end{itemize}
\end{lem}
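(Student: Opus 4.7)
The plan is to prove both directions via the complete isometric identification $A(H) \widehat{\otimes} A(H) \cong A(H \times H)$, which holds since $H \times H$ is an operator Fourier hypergroup by Proposition~\ref{p:product}. Under this identification, operator amenability of $A(H)$ is equivalent to the existence of a bounded approximate diagonal $(d_\alpha) \subseteq A(H \times H)$ in the usual Johnson sense, equivalently to the existence of a virtual diagonal $M \in A(H \times H)^{**}$ obtained as a weak$^*$-cluster point. The key bridge between such diagonals and the multiplier $1_\Delta$ is the ideal property $A(H \times H) \triangleleft A(H \times H)^{**}$, valid since $H \times H$ is a discrete regular Fourier hypergroup \cite[Lemma~3.2]{ma5}.

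For the direction $(ii) \Rightarrow (i)$: Theorem~\ref{t:Leptin-thm} supplies a contractive approximate identity $(e_\alpha) \subseteq A(H) \cap C_c(H)$ of finitely supported positive definite functions. Set
\[
  d_\alpha := 1_\Delta \cdot (e_\alpha \otimes e_\alpha) \in A(H \times H),
\]
so that $\|d_\alpha\|_{A(H \times H)} \leq \|1_\Delta\|_{MA(H \times H)}$ is uniformly bounded. Since $d_\alpha$ is supported on $\Delta$, one has $(a \cdot d_\alpha - d_\alpha \cdot a)(x,y) = (a(x) - a(y)) d_\alpha(x,y) = 0$ pointwise for every $a \in A(H)$, while $m(d_\alpha) = e_\alpha^2$ remains a contractive approximate identity. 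Hence $(d_\alpha)$ is a bounded approximate diagonal in $A(H) \widehat{\otimes} A(H)$, giving operator amenability of $A(H)$.

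For the direction $(i) \Rightarrow (ii)$: A bounded approximate diagonal $(d_\alpha) \subseteq A(H \times H)$ exists; applying $m$ yields a BAI in $A(H)$, so Theorem~\ref{t:Leptin-thm} delivers $(P_2)$. To extract the multiplier, let $M \in A(H \times H)^{**}$ be a weak$^*$-cluster point of $(d_\alpha)$, a virtual diagonal. The ideal property ensures $f \cdot M \in A(H \times H)$ for every $f \in A(H \times H)$. Since pointwise evaluation at $(x,y)$ on $A(H \times H)$ is implemented by pairing with $\lambda(x) \otimes \lambda(y) \in \VN(H \times H)$, a short Arens-product computation gives
\[
  (f \cdot M)(x,y) \;=\; f(x,y)\, \langle M, \lambda(x) \otimes \lambda(y) \rangle \;=\; f(x,y)\, \lim_\alpha d_\alpha(x,y).
\]
The identities $a \cdot d_\alpha - d_\alpha \cdot a \to 0$ and $a \cdot m(d_\alpha) \to a$, together with the fact that $A(H)$ separates the points of $H$ and contains a nonzero element at every point, force $d_\alpha(x,y) \to 1_\Delta(x,y)$ pointwise. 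Hence $(f \cdot M)(x,y) = (1_\Delta \cdot f)(x,y)$ for every $(x,y)$, and since $A(H \times H)$ embeds injectively in $C_0(H \times H)$, we conclude $f \cdot M = 1_\Delta \cdot f$ in $A(H \times H)$. This gives $1_\Delta \in MA(H \times H)$ with $\|1_\Delta\|_{MA(H \times H)} \leq \|M\|$.

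I expect the main obstacle to lie in the direction $(i) \Rightarrow (ii)$: identifying the abstract Arens product $f \cdot M$ in the bidual with the concrete pointwise product $1_\Delta \cdot f$. The two essential ingredients are the ideal property of $A(H \times H)$ in its bidual (ensuring $f \cdot M$ actually returns to $A(H \times H)$) and the injectivity of $A(H \times H) \hookrightarrow C_0(H \times H)$ (so that pointwise values determine elements); both rely on the discreteness of $H$.
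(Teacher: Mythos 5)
Your proof is correct. The direction $(ii)\Rightarrow(i)$ is essentially identical to the paper's: the same diagonal net $(e_\alpha\otimes e_\alpha)1_\Delta$, the same vanishing of the commutator by diagonal support, and $\mathbf{m}$ of it being $e_\alpha^2$. For $(i)\Rightarrow(ii)$, however, you take a genuinely different route. The paper first notes that operator amenability gives bounded approximate identities for $A(H)$ and $A(H\times H)$, hence $(P_2)$ for $H\times H$ via Theorem~\ref{t:Leptin-thm}, and then invokes Theorem~\ref{t:multiplier-A(H)} to get $MA(H\times H)=B_\lambda(H\times H)=C^*_\lambda(H\times H)^*$; the bounded approximate diagonal, which converges pointwise to $1_\Delta$, then clusters weak$^*$ to $1_\Delta$ in that dual space, so $1_\Delta\in B_\lambda(H\times H)=MA(H\times H)$. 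You instead pass to a virtual diagonal $M\in A(H\times H)^{**}$ and use the ideal property of $A(H\times H)$ in its bidual (\cite[Lemma~3.2]{ma5}, applicable since $H\times H$ is a discrete regular Fourier hypergroup by Proposition~\ref{p:product}) to land $f\cdot M$ back in $A(H\times H)$ and identify it pointwise with $1_\Delta f$. Both arguments rest on the same two facts --- pointwise convergence of the diagonal to $1_\Delta$, which needs the regularity and semisimplicity of $A(H)$ with maximal ideal space $H$, and discreteness --- but yours bypasses the BSE machinery behind Theorem~\ref{t:multiplier-A(H)} at the cost of an Arens-product computation, while the paper's version yields the marginally stronger conclusion that $1_\Delta$ lies in $B_\lambda(H\times H)$. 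One cosmetic remark: in the chain $(f\cdot M)(x,y)=f(x,y)\langle M,\lambda(x)\otimes\lambda(y)\rangle=f(x,y)\lim_\alpha d_\alpha(x,y)$ the middle term should carry the normalization $\lambda(x)^{-1}\lambda(y)^{-1}$ by the Haar weights, as in the paper's identity $u(x)=\lambda(x)^{-1}\langle x,u\rangle$; the factors cancel between the evaluation of $f\cdot M$ and the evaluation of $d_\alpha$, so the end formula and the argument are unaffected.
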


\begin{proof}
%First note that  the projective tensor product $A(H) \otimes^\gamma A(H)$ is isometrically Banach algebra  isomorphic to  $A(H \times H)$
% by the fact that both are the predual of the commutative von Neumann algebra $\VN(H\times H)$.
 %constructed by $\{\lambda(x,y): x,y \in H\}$ in $B(L^2(H))$.  (Or one may use the   isometric isomorphism $A(H) \cong L^1(\supp(\varpi))$, \cite{mu1}.)
Let ${\bf m}:A(H)\widehat{\otimes} A(H) \rightarrow A(H)$ be defined by ${\bf m}(u\otimes v)=uv$. Also define $u \cdot (v\otimes w):=(uv)\otimes w$ and $ (v\otimes w) \cdot u:=v\otimes (uw)$.
As is well-known, $A(H)$ is operator amenable if and only if $A(H\times H)=A(H)\widehat{\otimes} A(H)$ admits a bounded approximate diagonal, that is, a bounded net $(m_\alpha) \subseteq A(H\times H)$ satisfying $\norm{u\cdot m_\alpha - m_\alpha \cdot u}_{A(H\times H)} \rightarrow 0$ and $\norm{u {\bf m}(m_\alpha) - u}_{A(H)} \rightarrow 0$.

\vskip1.0em

$(i) \Rightarrow (ii)$. Since every operator amenable algebra has a bounded approximate identity, the condition $(P_2)$ is immediate from Theorem~\ref{t:Leptin-thm}, for both $H$ and $H\times H$. Hence, Theorem~\ref{t:multiplier-A(H)} implies that $MA(H\times H)=B_\lambda(H\times H)$. Let $(m_\alpha)_{\alpha}$ be a bounded approximate diagonal for $A(H)$. Then $(m_\alpha)_\alpha \subseteq MA(H\times H)$ is a bounded net. Also, for each pair $x,y \in H$, $x\neq y$, $m_\alpha(x,y) \rightarrow 0$ while $m_\alpha(x,x)\rightarrow 1$. Hence, as a bounded net in $MA(H\times H)$, $(m_\alpha)$ clusters to $1_{\Delta}$ in the weak$^*$ topology of $B_\lambda(H \times H)=C^*_\lambda(H \times H)^*$ (by Theorem~\ref{t:multiplier-A(H)}).

\vskip1.0em

$(ii) \Rightarrow (i)$. Since $H$ satisfies $(P_2)$, $A(H)$  has a bounded approximate identity $(e_\alpha)_{\alpha}$. Define,  $m_{\alpha}:= (e_\alpha \otimes e_\alpha) 1_\Delta$.
Clearly $(m_\alpha)$ is a bounded net in $A(H\times H)$. Also,
\[
{\bf m}(m_\alpha) = {\bf m}\left( \sum_{x  \in H} e_\alpha(x)^2 x \otimes x \right) = e_\alpha^2
\]
which is a bounded approximate identity of $A(H)$. Finally for each $u$ in $A(H)$
\begin{eqnarray*}
u \cdot m_\alpha - m_\alpha \cdot u &=& \sum_{x \in H}  u(x)  e_\alpha(x)^2 x \otimes x -
\sum_{x \in H} u(x)  e_\alpha(x)^2 x \otimes x =0.
\end{eqnarray*}
\end{proof}

In the sequel we shall need the following lemma which we present from \cite[Theorem~5.6]{ile-st}, modified to our case. Here, a \ma{dual Banach algebra} is in the sense of   \cite[Definition~1·1]{ru-dual}.

 \begin{lemma}\label{l:homomorphism-extension}
Let $A$ and $B$ be two Banach algebras and $\theta :A \rightarrow MB$  a bounded algebra homomorphism.   If $A$ has a contractive  approximate identity $(e_\alpha)_\alpha$ and $MB$ is a dual Banach algebra, then $\theta$ can be extended to a  bounded map $\theta : M A \rightarrow MB$ where for each $c\in MA$,
\begin{equation}\label{eq:extending-homomorphisms}
 \theta(c)  :=  w^*-\lim_\alpha \theta(ce_\alpha).
 \end{equation}
 \end{lemma}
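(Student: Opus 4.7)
The plan is to define the extension $\tilde\theta: MA \to MB$ directly by the formula $\tilde\theta(c) := w^*\text{-}\lim_\alpha \theta(ce_\alpha)$ from the statement, and then to verify in turn that the weak$^*$ limit exists, that the resulting $\tilde\theta$ is bounded, and (though not demanded by the statement) that it is multiplicative and extends $\theta$. For existence, observe that since $A$ is a closed ideal in $MA$ and $(e_\alpha)_\alpha$ is contractive, the net $(ce_\alpha)_\alpha$ lies in $A$ with $\|ce_\alpha\|_A \le \|c\|_{MA}$; hence $(\theta(ce_\alpha))_\alpha$ is norm-bounded in $MB$ by $\|\theta\|\|c\|_{MA}$. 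Since $MB$ is a dual Banach algebra, its closed balls are weak$^*$ compact by Banach--Alaoglu applied to the predual, so weak$^*$ cluster points of the net exist.

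The heart of the argument is the identity: for any weak$^*$ cluster point $T$ of $(\theta(ce_\alpha))_\alpha$ and any $a \in A$,
\[
T\,\theta(a) = \theta(ca) \qquad \text{and} \qquad \theta(a)\,T = \theta(ac).
\]
This follows by combining the norm convergence $\theta(ce_\alpha)\theta(a) = \theta(ce_\alpha a) \to \theta(ca)$ (valid because $e_\alpha a \to a$ and left multiplication by $c$ is bounded on $A$) with the separate weak$^*$ continuity of multiplication in the dual Banach algebra $MB$, which forces $\theta(ce_{\alpha'})\theta(a) \to T\theta(a)$ in the weak$^*$ topology along the subnet witnessing $T$; the right version is symmetric. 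The main obstacle is then uniqueness of the weak$^*$ cluster point, which is necessary to make sense of the limit along the full net and which is where the specific structure of the applications enters: two cluster points $T_1, T_2$ satisfy $(T_1 - T_2)\theta(a) = \theta(a)(T_1 - T_2) = 0$ for every $a \in A$, and uniqueness is extracted from this via a nondegeneracy property (for example, weak$^*$ density of $\theta(A)\cdot MB$, or a faithful action on a suitable module) that the concrete $\theta$ is assumed to satisfy.

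Granted that $\tilde\theta(c)$ is unambiguously defined as this unique weak$^*$ limit, the remaining verifications are soft. Linearity is inherited from the bilinear dependence of $(\theta(ce_\alpha))_\alpha$ on $c$; boundedness $\|\tilde\theta(c)\|_{MB} \le \|\theta\|\|c\|_{MA}$ follows from weak$^*$ lower semicontinuity of the norm on $MB$; multiplicativity $\tilde\theta(cc') = \tilde\theta(c)\tilde\theta(c')$ is a two-step application of the key identity combined with separate weak$^*$ continuity of multiplication; and for $c = a \in A$ one has $ae_\alpha \to a$ in norm, so $\theta(ae_\alpha) \to \theta(a)$ in norm and therefore weak$^*$, showing that $\tilde\theta$ genuinely extends $\theta$.
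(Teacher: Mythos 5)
First, a point of orientation: the paper does not actually prove this lemma. It is stated as a modification of \cite[Theorem~5.6]{ile-st} and used as a black box, so there is no in-paper argument to measure yours against. Your proposal is the natural abstract argument, and most of it is correct: $(ce_\alpha)_\alpha$ is bounded in $A$ because $A$ sits as a closed ideal in $MA$ and the approximate identity is contractive; weak$^*$ cluster points exist by Banach--Alaoglu applied to the predual of $MB$; the identity $T\theta(a)=\theta(ca)$ (and its right-handed version) follows exactly as you say from the norm convergence $\theta(ce_\alpha a)\to\theta(ca)$ together with separate weak$^*$ continuity of the product in the dual Banach algebra $MB$; and the concluding verifications (linearity, the norm bound via weak$^*$ lower semicontinuity of the dual norm, multiplicativity, and $\tilde\theta|_A=\theta$) are sound once the limit is well defined.

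The genuine gap is the one you flag yourself and then do not close: the existence of the weak$^*$ limit along the \emph{full} net, i.e.\ uniqueness of the cluster point. From the stated hypotheses you only obtain $(T_1-T_2)\theta(a)=\theta(a)(T_1-T_2)=0$ for all $a\in A$; equivalently, the bounded scalar net $\langle\theta(ce_\alpha),\omega\rangle$ is guaranteed to converge only for $\omega$ in the closed linear span of $\theta(A)\cdot (MB)_*+(MB)_*\cdot\theta(A)$ inside the predual $(MB)_*$. Nothing in the lemma as stated forces this span to be all of $(MB)_*$ (or to norm the weak$^*$ closure of $\theta(A)$), so the step ``the weak$^*$ limit exists'' cannot be justified from the listed hypotheses alone; some nondegeneracy of $\theta$ --- for instance norm density of $\theta(A)\cdot(MB)_*$ in $(MB)_*$ --- must be added. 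To that extent your proof is incomplete exactly where the statement itself is incomplete, and this is presumably what ``modified to our case'' is meant to absorb.

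You should make that extra hypothesis explicit and check it in the paper's application: there $\theta=\mathbf{m}:A(H\times H)\to A(H)\subseteq MA(H)=B_\lambda(H)=C^*_\lambda(H)^*$, the range of $\theta$ is $A(H)$ (by Cohen factorization), and $A(H)\cdot C^*_\lambda(H)$ is norm dense in $C^*_\lambda(H)$ (test on $c_c(H)$ using the finitely supported contractive approximate identity from Theorem~\ref{t:Leptin-thm}), so the cluster point is unique there and the extension formula is legitimate. With that hypothesis stated and verified, your argument is complete; without it, it proves only that $\theta$ extends after passing to a subnet/cluster point, which is weaker than the formula \eqref{eq:extending-homomorphisms} asserts.
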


\begin{rem}\label{r:ile-sto-considerations}
Let $H$ be a discrete regular  Fourier hypergroup satisfying $(P_2)$.
Then $MA(H)=B_\lambda(H)$ is the dual of $C^*_\lambda(H)$. Under the canonical pointwise action $C^*_\lambda(H)$ becomes a $B_\lambda(H)$-bimodule. It then follows that multiplication in $B_\lambda(H)$ is separately weak* continuous, making $B_\lambda(H)$ a dual Banach algebra.
\end{rem}

One of the  technique in the proof of the following theorem is inspired by \cite{polish}.

\begin{theorem}\label{t:A(H)-amenable}
Let $H$ be a discrete commutative hypergroup satisfying $(P_2)$, $B(H\times H) = B_\lambda(H\times H)$, and $|\{\lambda(x): x\in H\}|< \infty$. Then $A(H)$ is amenable.
\end{theorem}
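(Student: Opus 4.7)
The plan is to apply Lemma~\ref{l:1diagonal-in-MA(HxH)}: since $(P_2)$ is assumed, it suffices to show $1_\Delta\in MA(H\times H)$. By Propositions~\ref{p:M(A(H))=B-lambda-commutative} and~\ref{p:product}, $H\times H$ is a discrete operator Fourier hypergroup, and it inherits property $(P_2)$ from $H$; hence by Theorem~\ref{t:multiplier-A(H)} we have $MA(H\times H) = B_\lambda(H\times H)$ isometrically, which under our hypothesis coincides with $B(H\times H)$.

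The central construction is the ``reproducing kernel''
$$g(x,y) := \int_{\supp(\varpi)} \overline{\chi(x)}\,\chi(y)\,d\varpi(\chi), \qquad (x,y)\in H\times H,$$
which I would recognize simultaneously as an element of $B(H\times H)$ and as an explicit function on $H\times H$. For the former, $\supp(\varpi)$ is compact (since $C^*_\lambda(H)$ is unital for discrete $H$), and each $(x,y)\mapsto\overline{\chi(x)}\chi(y)$ is a character of $H\times H$; integrating these norm-one states of $C^*(H\times H)$ against the finite positive measure $\varpi$ produces a bounded functional on $C^*(H\times H)$, so $g\in B(H\times H)=MA(H\times H)$. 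For the latter, the Plancherel--Parseval identity applied to the indicator functions of $\{x\}$ and $\{y\}$ yields the orthogonality relation $g(x,y)=\delta_{x,y}/\lambda(x)$.

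Let $\{n_1,\dots,n_k\}$ denote the finitely many distinct values of $\lambda$. By Lagrange interpolation at the $k+1$ distinct points $0, 1/n_1,\dots, 1/n_k$, one obtains a polynomial
$$Q(t) := 1 - \prod_{i=1}^k (1 - n_i t) = \sum_{j=1}^k c_j t^j$$
satisfying $Q(0)=0$ and $Q(1/n_i)=1$ for every $i$. Since $MA(H\times H)$ is a Banach algebra, the element $Q(g):=\sum_{j=1}^k c_j g^j$ belongs to $MA(H\times H)$; and because multipliers of the semisimple algebra $A(H\times H)$ correspond to bounded functions on its Gelfand spectrum $H\times H$ with pointwise product, $g^j$ as a multiplier coincides with the pointwise $j$-th power of $g$ as a function. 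Computing pointwise, $Q(g)(x,y)=Q(g(x,y))$ equals $Q(1/\lambda(x))=1$ when $x=y$ and $Q(0)=0$ otherwise, so $Q(g)=1_\Delta\in MA(H\times H)$, and Lemma~\ref{l:1diagonal-in-MA(HxH)} delivers operator amenability of $A(H)$, which equals ordinary amenability as $H$ is commutative.

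The main obstacle will be justifying cleanly the two roles of $g$ — as a functional on $C^*(H\times H)$ via the character integral, and as the explicit diagonal kernel $\delta_{x,y}/\lambda(x)$ via the Plancherel orthogonality for commutative hypergroups — so that the polynomial $Q$ can act on $g$ consistently in both pictures, allowing the finiteness of $\{\lambda(x):x\in H\}$ to do its work through the Lagrange interpolation trick.
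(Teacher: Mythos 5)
Your argument is correct, and while it opens identically to the paper's (reduction to showing $1_\Delta\in MA(H\times H)$ via Lemma~\ref{l:1diagonal-in-MA(HxH)}, then producing the diagonal kernel $(x,y)\mapsto\delta_{x,y}/\lambda(x)$ inside $B(H\times H)=B_\lambda(H\times H)=MA(H\times H)$), it diverges in the final step in a genuinely different and arguably cleaner way. Note first that your $g$ and the paper's kernel are literally the same element: the paper realizes $\delta_{x,y}/\lambda(x)$ as the coefficient function $\psi(x,y)=\la\pi(x,y)e,e\ra$ of the representation $\pi(x,y)=\lambda(\tilde{x})\lambda(y)$ on $L^2(H)$, whereas you realize it as the integral $\int_{\supp(\varpi)}\overline{\chi(x)}\chi(y)\,d\varpi(\chi)$ of product characters against the (finite, since $\lambda(e)=1$ is the standing normalization of Section 4) Plancherel measure; both land in $B(H\times H)$ and agree pointwise by the Plancherel orthogonality you cite. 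The real difference is how the kernel is converted to $1_\Delta$. The paper pushes $\psi$ forward to $\phi=\bm(\psi)=\sum_x\lambda(x)^{-1}x\in MA(H)$ using Lemma~\ref{l:homomorphism-extension} (which requires the dual Banach algebra structure on $B_\lambda(H)$ from Remark~\ref{r:ile-sto-considerations}), shows $\phi$ is invertible in $MA(H)$ by a spectral-mapping argument exploiting the finiteness of $\{\lambda(x)\}$, and then writes $1_\Delta=(\phi^{-1}\otimes 1)\psi$. You instead apply the constant-term-free polynomial $Q(t)=1-\prod_i(1-n_it)$ directly to $g$ inside the commutative Banach algebra $MA(H\times H)$, using that multipliers of the regular semisimple algebra $A(H\times H)$ are functions on its spectrum $H\times H$ under pointwise product, so that $Q(g)=1_\Delta$ pointwise and hence in $MA(H\times H)$. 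This bypasses the homomorphism-extension lemma, the dual Banach algebra considerations, and the Gelfand-theoretic invertibility argument entirely; the finiteness hypothesis enters in both proofs through a polynomial identity, but yours uses it in one step where the paper uses it in three. The only points worth spelling out in a final write-up are the ones you already flag: measurability/boundedness of $\chi\mapsto(\overline{\chi}\otimes\chi)(a)$ for $a\in C^*(H\times H)$ so that the character integral defines a bounded (indeed positive, norm-one) functional, and the consistency of the two descriptions of $g$ under the identification of $B(H\times H)$ with functions on $H\times H$.
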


\begin{proof}
By Lemma~\ref{l:1diagonal-in-MA(HxH)}, we need only show that $1_\Delta \in MA(H \times H)$. To this end, define $\pi: H\times H \rightarrow \mathcal{B}(L^2(H))$  by
\[
\pi(x,y) \xi =  \lambda(\tilde{x})\lambda(y)   \xi
\]
for each $\xi \in L^2(H)$ where $ \lambda(f)  \xi $ denotes the left regular representation of  $\ell^1(H)$ on $L^2(H)$ \cite[Theorem~6.2I]{je}. Since $H$ is commutative and $\ell^1(H \times H)=\ell^1(H) \otimes_\gamma \ell^1(H)$, it follows that $\pi$ is indeed a hypergroup representation of $H\times H$  \cite[Definition~2.1.1]{bl}.  Define $\psi(x,y) := \langle \pi(x,y)  e, e\rangle$ on $H \times H$.  Based on the definition of a  Haar measure on a discrete hypergroup (see \cite[Theorem~1.3.26]{bl}), we have
\[
\psi(x,y) = \left\{
\begin{array}{l l}
\frac{1}{\lambda(x)}& y=x\\
0& \text{otherwise}
\end{array}
\right.
\]
As a coefficient function of $\pi$, we have $\psi \in B(H \times H) =B_\lambda (H \times H) = MA(H\times H)$.
%Further, $\norm{\psi}_{B(H\times H)} \leq \|\delta_e\|^2 =1$.

Let $\bm: A(H\times H) \rightarrow A(H)$  be the multiplication.
  Since $H$ satisfies  $(P_2)$, one can easily show that $H\times H$ does as well. Therefore, $A(H \times H)$ has a contractive approximate identity. By Remark~\ref{r:ile-sto-considerations}, $MA(H)=B_\lambda(H)$ is a dual Banach algebra, so by Lemma~\ref{l:homomorphism-extension}, $\bm$ can be extended to a homomorphism $\bm: MA(H\times H) \rightarrow MA(H)$.
   Indeed, (\ref{eq:extending-homomorphisms}) implies that for each $\rho \in MA(H \times H)$, $
  \bm(\rho)= \sum_{x \in H} \rho(x,x) x\in MA(H)$.  In particular, for the aforementioned $\psi \in B(H \times H)$,
  \[
  \bm(\psi) = \sum_{x\in H} \frac{1}{\lambda(x)} x \in MA(H).
  \]
Let $\phi:=\bm(\psi) \in MA(H)$.  Since $\{\lambda(x): x\in H\}$ is finite,  $A:=\{\phi(x): x\in H\}$ is a finite set excluding $0$.  Indeed, if $A = \{a_1,\ldots, a_k\}$ then  the polynomial $p(z) = (z -a_1)\dots(z - a_k)$ satisfies $p(\phi)(x) = 0$ for every $x \in H$. Therefore $p(\phi) = 0$,
which in turn yields that $p(\gamma(\phi)) = 0$ for every linear multiplicative functional $\gamma$ on $MA(H)$. Hence $\gamma(\phi)$ is a root of polynomial $p$ and therefore $\gamma(\phi)\neq 0$ for every linear multiplicative functional $\gamma$ of the unital commutative Banach algebra $MA(H)$.  But it is known that such a function is invertible, i.e. $\phi^{-1} \in MA(H)$, see \cite[p217]{katz}.
Now, it is easy to check  that $\phi^{-1} \otimes 1 \in MA(H\times H)$. Consequently,   $1_\Delta = (\phi^{-1} \otimes 1) \psi$ belongs to  $MA(H \times H)$,
which finishes the proof.
\end{proof}

We believe the converse of Theorem~\ref{t:A(H)-amenable} to be true, that is, for a discrete  commutative  regular Fourier hypergroup $H$ satisfying $(P_2)$ with unbounded Haar measure, $A(H)$ is not amenable. This conjecture is  based on the known examples in the group case. See \cite{ma5, AzSaSp} and our results in the following subsections.

\subsection{Application to compact groups}\label{ss:compact-groups}

In this subsection, we use Theorem~\ref{t:A(H)-amenable} to prove one side of Conjecture~0.1 from \cite{AzSaSp} which we present as the following corollary.

\begin{cor}\label{c:AM-ZL1(G)}
Let $G$ be a compact group with an open abelian subgroup. Then $ZL^1(G)$ is amenable.
\end{cor}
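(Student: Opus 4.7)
The plan is to invoke Theorem~\ref{t:A(H)-amenable} for the discrete commutative regular Fourier hypergroup $H = \wG$, and then transfer the conclusion back to $Z\LO$ via the isometric isomorphism $Z\LO \cong A(\wG)$ recalled in the introduction. Commutativity of $\wG$ (which comes from the symmetry of tensor-product decompositions of irreducible representations) ensures that operator amenability and ordinary amenability of $A(\wG)$ coincide, as noted just before Lemma~\ref{l:1diagonal-in-MA(HxH)}, so the amenability delivered by Theorem~\ref{t:A(H)-amenable} will land exactly where we want it.

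I would verify the three hypotheses of Theorem~\ref{t:A(H)-amenable} in turn. First, $\wG$ satisfies $(P_2)$ because the trivial character $\chi_0\equiv 1$ on $\wG$, which corresponds to the identity element of $G$, lies in $\supp(\varpi)$; this is the criterion recalled in the preliminaries just before Theorem~\ref{t:Leptin-thm}. Second, the Haar weights on $\wG$ are $\lambda(\pi) = d_\pi^2$, and by Moore's classical structure theorem a compact group admitting an open (equivalently, finite-index) abelian subgroup has irreducible representations of uniformly bounded degree, with $d_\pi$ dividing the index of the abelian subgroup; hence $\{\lambda(\pi):\pi\in\wG\}$ is finite. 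Third, I would establish $B(\wG\times\wG) = B_\lambda(\wG\times\wG)$ as a short auxiliary lemma, using the identification $\wG\times\wG \cong \operatorname{Irr}(G\times G)$ together with the general principle, alluded to in the remark following Theorem~\ref{t:multiplier-A(H)}, that the full and reduced Fourier--Stieltjes spaces coincide for representation hypergroups of compact groups (reflecting amenability of $G\times G$ at the hypergroup level).

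With these three ingredients in hand, Theorem~\ref{t:A(H)-amenable} yields amenability of $A(\wG)$, and the isomorphism $Z\LO \cong A(\wG)$ promotes this to amenability of $Z\LO$. The main technical obstacle I anticipate is the identity $B = B_\lambda$ on the product hypergroup: it is not automatic because the universal and reduced hypergroup $C^*$-algebras of $\wG\times\wG$ are not given directly by $C^*(G\times G)$, so one must use the specific way the center of the group algebra sits inside $C^*(G\times G)$ and exploit nuclearity/coamenability features of the ambient compact group to descend the equality to the hypergroup side. Once that lemma is proved, the corollary follows by direct verification.
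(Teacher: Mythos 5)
Your proposal follows the paper's proof essentially verbatim: the paper likewise reduces to Theorem~\ref{t:A(H)-amenable} applied to $\wG$, invokes Moore's theorem to get uniformly bounded degrees (hence finiteness of the Haar weights $\lambda(\pi)=d_\pi^2$), and disposes of the condition $B=B_\lambda$ on $\operatorname{Irr}(G\times G)$ via an auxiliary result (Proposition~\ref{p:B(^G)-multiplier-of-A(^G)}) identifying $C^*(\wG)$ with $ZC(G)$ and hence $B(\wG)$ with $ZM(G)=MA(\wG)$, which equals $B_\lambda(\wG)$ by Theorem~\ref{t:multiplier-A(H)}. The auxiliary lemma you defer is exactly that proposition, and your sketch of it is in the same spirit, so the two arguments coincide.
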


To prove this corollary let us start by some observations regarding hypergroup structures admitted by compact groups.   It is known that there are two hypergroup structures constructed  by a compact group $G$. The compact commutative hypergroup of conjugacy classes of $G$, denoted by $\conj(G)$, and the discrete  commutative hypergroup of classes of irreducible representations of $G$, denoted by $\IrrG$, with the Haar measure $\lambda(\pi)=d_\pi^2$ where $d_\pi$ is the dimension of $\pi \in \IrrG$. It is known that both of these hypergroup structures are regular Fourier and satisfy $(P_2)$ (see \cite{ma5,mu2}). Also, they are strong hypergroups and are the duals of each other (see \cite[Subsection~2.1]{ma14}). On the one hand, $A(\conj(G))$ is isometrically Banach algebra isomorphic to $ZA(G)$, the central Fourier algebra of $G$. For more on $ZA(G)$, see \cite{ma12}. On the other hand, $A(\IrrG)$ is isometrically Banach algebra isomorphic to $ZL^1(G)$, the center of the group algebra \cite[Theorem~3.7]{ma}.

It is known that $\operatorname{Irr}(G\times G)=\IrrG\times \IrrG$. To use Theorem~\ref{t:A(H)-amenable}, we  need to prove that $B(\operatorname{Irr}(G\times G)) =MA(\operatorname{Irr}(G\times G))$.

\begin{proposition}\label{p:B(^G)-multiplier-of-A(^G)}
For any compact group $G$, $B(\wG)$ is isometrically Banach algebra isomorphic to $ZM(G)$, the center of the measure algebra of $G$. Further, $A(\wG)$ is an ideal in $B(\wG)$ and $B(\wG)=MA(\wG)$.
\end{proposition}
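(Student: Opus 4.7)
The plan is to construct the isometric Banach algebra isomorphism $B(\IrrG)\cong ZM(G)$ as a composition $B(\IrrG)\cong M(\conj(G))\cong ZM(G)$, and then to deduce the remaining two statements from Theorem~\ref{t:multiplier-A(H)} together with the strong duality between $\IrrG$ and $\conj(G)$ recorded in \cite{ma14}. For the identification $ZM(G)\cong M(\conj(G))$, the orbit map $q:G\to\conj(G)$ induces the pushforward $q_*:ZM(G)\to M(\conj(G))$; conversely, a measure $\tilde\mu$ on $\conj(G)$ lifts to a central measure on $G$ by integrating against the unique $G$-invariant probability measure on each (compact) conjugacy class. These are mutual inverses and isometric for the total variation norm. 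Since the $\conj(G)$-hypergroup convolution is defined precisely as the pushforward of group convolution restricted to central measures, the identification is simultaneously an algebra isomorphism, mapping $ZL^1(G)$ onto $L^1(\conj(G))\subseteq M(\conj(G))$.

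For the second identification $B(\IrrG)\cong M(\conj(G))$, I use that $\IrrG$ is a discrete commutative hypergroup, so $\ell^1(\IrrG)$ and its enveloping $C^*$-algebra $C^*(\IrrG)$ are commutative; by strong duality the Gelfand spectrum of $C^*(\IrrG)$ is precisely $\widehat{\IrrG}=\conj(G)$, yielding $C^*(\IrrG)\cong C(\conj(G))$ through the hypergroup Fourier transform $f\mapsto\hat f$. Taking Banach space duals produces the isometric linear isomorphism $B(\IrrG)=C^*(\IrrG)^*\cong C(\conj(G))^*=M(\conj(G))$. To upgrade this to an algebra isomorphism, one checks that pointwise multiplication on $B(\IrrG)$ is intertwined with convolution on $M(\conj(G))$; by weak$^*$-density of point masses in $M(\conj(G))$ and bilinear continuity, this reduces to the strong-duality identity
\[
\chi_x(\pi)\,\chi_y(\pi)=\int_{\conj(G)}\chi_z(\pi)\,d(x\cdot y)(z) \qquad (x,y\in\conj(G),\;\pi\in\IrrG),
\]
which says that the pointwise product of two characters of $\IrrG$ decomposes via the hypergroup convolution in the dual hypergroup $\conj(G)$. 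Composing with the previous paragraph then delivers $B(\IrrG)\cong ZM(G)$ as an isometric isomorphism of Banach algebras.

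The two remaining claims follow cleanly. Under the established isomorphisms together with $A(\IrrG)\cong ZL^1(G)$ from \cite[Theorem~3.7]{ma}, the subalgebra $A(\IrrG)$ of $B(\IrrG)$ corresponds to $ZL^1(G)=ZM(G)\cap L^1(G)$, which is a closed two-sided ideal in $ZM(G)$ since $L^1(G)$ is an ideal in $M(G)$; hence $A(\IrrG)$ is an ideal in $B(\IrrG)$. For the equality $B(\IrrG)=MA(\IrrG)$: compactness of $G$ ensures that the trivial character lies in $\supp(\varpi)$, so $\IrrG$ satisfies $(P_2)$, and Theorem~\ref{t:multiplier-A(H)} gives $MA(\IrrG)=B_\lambda(\IrrG)$. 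Moreover, strong duality identifies the Plancherel measure on $\widehat{\IrrG}$ with the normalized Haar measure of the compact hypergroup $\conj(G)$, which has full support; thus $\supp(\varpi)=\widehat{\IrrG}$, and the descriptions of $B(\IrrG)$ and $B_\lambda(\IrrG)$ recalled in Section~\ref{s:preliminaries} force $B_\lambda(\IrrG)=B(\IrrG)$. The step requiring the most care is the multiplicative compatibility in the Gelfand identification of the second paragraph, which is precisely where the strong hypergroup duality between $\IrrG$ and $\conj(G)$ is essential.
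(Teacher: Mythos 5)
Your proof is correct, but its two halves sit differently relative to the paper's argument. For the isometric isomorphism $B(\wG)\cong ZM(G)$, the paper works directly with the map $T(\pi)=d_\pi\chi_\pi$, extends it to a $*$-isomorphism $C^*(\wG)\cong ZC(G)$, and checks multiplicativity of $T^*$ by hand using Schur's lemma ($\pi(\mu)=c_\pi^{(\mu)}I$ for central $\mu$); your detour through $M(\conj(G))$ and the Gelfand transform of the commutative $C^*$-algebra $C^*(\wG)$ is the same identification in different packaging, since the Gelfand transform here is exactly the normalized character map and your ``strong-duality identity'' is the integrated form of the paper's computation. (One small point of rigor: reducing multiplicativity to point masses requires separate weak$^*$-continuity of both products, which as written is hidden in the phrase ``bilinear continuity''; it is cleanest either to pair with each $\delta_\pi\in\ell^1(\wG)\subseteq C^*(\wG)$, so that everything becomes weak$^*$-continuous scalar functions, or to bypass density entirely with a direct Fubini computation.) Where you genuinely diverge is the equality $B(\wG)=MA(\wG)$: the paper obtains it on the group side, from the standard fact that $ZM(G)$ is the multiplier algebra of $ZL^1(G)$ via a bounded approximate identity, and then reads off the ideal property; you obtain it on the hypergroup side by combining Theorem~\ref{t:multiplier-A(H)} (which gives $MA(\wG)=B_\lambda(\wG)$ from $(P_2)$) with the full support of the Plancherel measure on $\widehat{\wG}\cong\conj(G)$ (which gives $B_\lambda(\wG)=B(\wG)$), and you get the ideal property directly from $L^1(G)$ being an ideal in $M(G)$. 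Your route has the merit of making the coincidence $B_\lambda(\wG)=B(\wG)$ explicit --- a fact the paper only alludes to at the end of Subsection~\ref{ss:multipliers-A(H)} --- at the cost of leaning on the strong duality between $\wG$ and $\conj(G)$ from \cite{ma14}; the paper's route stays self-contained on the group side.
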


This result is most likely known to experts, but since we could not find a reference, we present a proof.

\vskip0.5em

\begin{proof}
The mapping $T:L^1(\wG)\rightarrow ZC(G)$ given by
\begin{equation}
T(\pi) = d_\pi \chi_\pi
\end{equation}
is a *-homomorphism with dense range. It therefore extends to a surjective *-homomorphism $T:C^*(\wG)\rightarrow ZC(G)$. One can similarly construct a left inverse to $T$, so that $C^*(\wG)$ is isometrically isomorphic to $ZC(G)$. Thus, $B(\wG)=C^*(\wG)^*$ is isometrically isomorphic to $ZM(G)$ through the dual  mapping $T^*:ZM(G) \rightarrow B(\wG)$. We show that $T$ is multiplicative.

Since each $\mu \in ZM(G)$ commutes with every $f\in L^1(G)$, $\pi(\mu)=c_\pi^{(\mu)} I$, where $I$ is the identity matrix of $\cB(\cH_\pi)$ and
\[
c_\mu^{(\pi)} = \int_G \pi_{i,i}(x) d\mu(x) = \frac{1}{d_\pi} \int_G \chi_\pi(x) d\mu(x)
\]
for some $i\in 1, \ldots, d_\pi$.
Also, for a function $f\in L^\infty(\wG)$, $f(\pi)= d_\pi^{-2} \langle f, \pi\rangle$ where $\langle \cdot,\cdot\rangle$ denotes the dual action of $L^1(\wG)$ and $L^\infty(\wG)$.
Hence,
\begin{eqnarray*}
T^*(\mu * \nu)(\pi)  &=& \frac{1}{d_\pi^2} \langle \mu * \nu , d_\pi \chi_\pi\rangle = \frac{1}{d_\pi} \sum_{i=1}^{d_\pi}  [\pi(\mu * \nu)]_{i,i}\\
&=& \frac{1}{d_\pi} \sum_{i=1}^{d_\pi}  [\pi(\mu) \pi(\nu)]_{i,i} = \frac{1}{d_\pi} \sum_{i,k=1}^{d_\pi}  [\pi(\mu)]_{i,k} [\pi(\nu)]_{k,i}\\
&=& \frac{1}{d_\pi} \sum_{i=1}^{d_\pi} c_\pi^{(\mu)} c_\pi^{(\nu)} =  T^*(\mu)(\pi) T^*(\nu)(\pi).
\end{eqnarray*}
Using a bounded approximate identity in $ZL^1(G)$, the standard argument shows that $ZM(G)$ is the space of all multipliers of $ZL^1(G)$. Thus, $B(\wG)=M(A(\wG))$ and $A(\wG)$ is an ideal in $B(\wG)$.
\end{proof}

\vskip1.5em

\noindent{\it Proof of Corollary~\ref{c:AM-ZL1(G)}.}
Note that every  open abelian subgroup of a compact group  has only finitely many cosets. Therefore, $G$ is virtually abelian. By \cite{moo}, a (locally) compact group is virtually abelian if and only if there exists an upper bound on the dimensions of its irreducible (unitary) representations. Thus, $\{\lambda(\pi)=d_\pi^2: \pi \in \IrrG\}$ is a finite set, and the discrete hypergroup $\IrrG$ satisfies all the conditions of Theorem~\ref{t:A(H)-amenable}. Therefore, $A(\IrrG) \cong ZL^1(G)$ is amenable.
\hfill $\Box$

\vskip1.5em

\begin{rem}\label{r:error}
The main theorem of \cite{fam}, which is stated for compact hypergroups, claims to prove both sides of \cite[Conjecture~0.1]{AzSaSp}, and, in particular,  Corollary~\ref{c:AM-ZL1(G)}, using the theory of compact hypergroups. Unfortunately, there are gaps in each direction of the proof of that result. Therefore to our knowledge, the other side of the conjecture  remains open.

 Here we briefly mention the gaps in the argument presented in \cite{fam} using its notation and context. In the first computation line on p. 1617, we should have
\[
\langle \mu, f \otimes f\rangle = k_\pi^2\langle \widehat{\mu}, \widehat{f} \otimes \widehat{f}\rangle.
\]
The extra coefficient $k_\pi^2$ is missing, there. Hence the deduction that $k_\pi/d_\pi \leq \norm{\mu}$, cannot be made form these computations.

The strongest conclusion one gains from the computation which is intended to verify $(i)\Rightarrow (ii)$, on p. 1617, is that the sequence $(M_n)_n$ is bounded with respect to its duality with $L^2(K)\widehat{\otimes}L^2(K)$. Since this space is well-known to be the pre-dual of $B(L^2(K))$, this shows that the sequence $(M_n)_n$ is bounded as convolution operators. This is insufficient information to conclude it is a bounded sequence of measures.
\end{rem}

 \subsection{Application to discrete groups}\label{ss:discrete-groups}

Let $G$ be a discrete \ma{finite conjugacy} or \ma{FC} group, that is, for each conjugacy class $C$, $|C|<\infty$. Let $\conj(G)$ denote the set of all conjugacy classes of $G$. Define $ZA(G)$ to be the subalgebra of class functions in the Fourier algebra of $G$.

It is known that $\conj(G)$ forms a discrete commutative hypergroup \cite{bl}. For each $C\in \conj(G)$ we have $\lambda(C)=|C|$, where $\lambda$ is the normalized Haar measure of $\conj(G)$. It was shown in \cite{mu2} that $ZA(G)$ is isometrically isomorphic to the Fourier algebra of $\conj(G)$, and, therefore, $\conj(G)$ forms a regular Fourier hypergroup. As a strong hypergroup, $\conj(G)$ always satisfies $(P_2)$.
Moreover, Theorem~3.15 and Theorem~4.2 in \cite{mu2} imply that $ZB(G\times G)=B(\conj(G\times G))=MA(\conj(G\times G))$ as every FC group is amenable.

  A discrete  group $G$ is called a   \ma{finite  commutator group} or \ma{FD} if its derived subgroup is finite. It is immediate that for a  group $G$, and  every $C\in \conj(G)$,   $|C|\leq |G'|$ where $G'$ is the \ma{derived subgroup} of $G$. Therefore, the orders of conjugacy classes of an FD group are uniformly bounded by $|G'|$. The converse is also true, that is, for an FC group $G$, if the orders of conjugacy classes are uniformly bounded then $G$ is an FD group \cite[Theorem~14.5.11]{rob}. In other words, for every FD group $G$ the Haar measure on $\conj(G)$ is bounded. Theorem~\ref{t:A(H)-amenable} immediately yields the following corollary.

\begin{cor}\label{c:A(Conj(G))}
Let $G$ be a discrete FD group. Then $ZA(G)$ is amenable.
\end{cor}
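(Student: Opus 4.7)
The plan is to obtain the conclusion as a direct application of Theorem~\ref{t:A(H)-amenable} to the hypergroup $H=\conj(G)$. Since $ZA(G)\cong A(\conj(G))$ isometrically as Banach algebras (by the result of Mukherjea recalled just before the corollary), it suffices to verify that $\conj(G)$ satisfies all four hypotheses of Theorem~\ref{t:A(H)-amenable}.

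First I would recall that for any FC group $G$, $\conj(G)$ is a discrete commutative regular Fourier hypergroup, and, being a strong hypergroup, it automatically satisfies $(P_2)$; both facts are quoted just before the corollary. Next, amenability of $G$ (every FC group is amenable) combined with the identifications
\[
ZB(G\times G)=B(\conj(G\times G))=MA(\conj(G\times G))
\]
from Theorem~3.15 and Theorem~4.2 of \cite{mu2}, gives $B(\conj(G)\times\conj(G))=B_\lambda(\conj(G)\times\conj(G))$, since in the amenable case $B(G\times G)=B_\lambda(G\times G)$ and passing to class functions preserves this equality through the above isomorphisms.

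The one remaining hypothesis is that $\{\lambda(C):C\in\conj(G)\}$ be finite, where $\lambda(C)=|C|$ is the value of the normalized Haar measure. This is exactly where the FD hypothesis enters: for any group $G$ and any conjugacy class $C\in\conj(G)$ one has $|C|\leq |G'|$, so when $G'$ is finite the set of class sizes is a subset of $\{1,2,\dots,|G'|\}$ and hence finite. (Equivalently, this is the characterization of FD groups among FC groups by uniform boundedness of class sizes, cited as \cite[Theorem~14.5.11]{rob}.)

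With all four hypotheses of Theorem~\ref{t:A(H)-amenable} verified, the theorem yields that $A(\conj(G))$ is amenable, and transporting through the isometric isomorphism $A(\conj(G))\cong ZA(G)$ gives the result. There is no real obstacle here: the corollary is essentially a bookkeeping exercise translating the FD condition on $G$ into the boundedness of the Haar measure on $\conj(G)$, with the nontrivial work already done in Theorem~\ref{t:A(H)-amenable}.
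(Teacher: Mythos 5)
Your proposal is correct and follows exactly the route the paper takes: the corollary is stated as an immediate consequence of Theorem~\ref{t:A(H)-amenable}, with the hypotheses on $\conj(G)$ (discrete commutative regular Fourier, $(P_2)$ via strongness, $B=B_\lambda$ on the product via amenability of FC groups and \cite{mu2}, and finitely many class sizes via $|C|\leq|G'|$) verified in the paragraphs preceding it, just as you do. The only slip is attributing the isomorphism $ZA(G)\cong A(\conj(G))$ to ``Mukherjea'' rather than Muruganandam \cite{mu2}, which is immaterial to the argument.
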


 In subsequent work by the first author and Spronk, the space $ZA(G)$ is studied for larger classes of locally compact groups.

\begin{section}{Hypergroup structure of compact quantum groups}\label{s:CQG}

The intention of this section is to establish a connection between hypergroup theory and compact quantum groups. Although some of the results below are likely known to experts, we believe that our explicit presentation will help to bridge the two communities.

In what follows we adopt the notation of \cite{ac}. We also refer the reader to \cite{ac} for a relevant introduction to compact quantum groups, their irreducible co-representations and (quantum) characters.

Let $\G$ be a compact quantum group.  As in the case with compact groups, the irreducible characters of $\G$ play an important role in the harmonic analysis. For $\al\in\Irr$, we let $\chi^\al$ be the \emph{character} of $\al$, and we let $\chi^\al_q$ be the \emph{quantum character} of $\al$. The quantum characters (as well as the characters) satisfy the decomposition relations: $\chi_q^\al\chi_q^\be=\sum_{\gamma\in\Irr}N^{\gamma}_{\al\be}\chi^\gamma_q$, where $N^{\gamma}_{\al\be}$ is the multiplicity of $\gamma$ in the tensor product representation $\al\ten\be$.
It is easy to show that for every compact quantum group $\G$, the set of (equivalence classes of) irreducible co-representations, $\Irr$, admits a discrete hypergroup
via the multiplication
\[
\alpha \cdot \beta :=\sum_{\gamma \in \Irr} \frac{d_\gamma}{d_\alpha d_\beta} N_{\alpha,\beta}^\gamma \gamma
\]
where $d$ denotes the quantum dimension. We denote this hypergroup structure by  $(\Irr,d)$. A similar decomposition leads to a hypergroup structure using the regular dimension, $n$,  which is denoted by $(\Irr,n)$. It is known that $(\Irr,n)$ and $(\Irr,d)$ coincide if and only if $\G$ is of Kac type.

Let $\lm_n$ and $\lm_d$ denote the respective left regular representations. We also let $\mathrm{Char}_q(\G)=\overline{\mathrm{span}}\{\vphi^\al_q\mid\al\in\Irr\}$. Then $\mathrm{Char}_q(\G)$ is a closed ideal in $\mc{Z}(\LOQ)$. We believe, but have been unable to show in general, that $\mc{Z}(\LOQ)=\mathrm{Char}_q(\G)$.
In the following we appeal to the \ma{Frobenius Reciprocity} property of the hypergroups $(\Irr,n)$ and $(\Irr,d)$ which is equivalent to the {Frobenius Reciprocity} of the compact quantum group i.e. $N_{\alpha\beta}^\gamma= N_{\overline{\alpha},\gamma}^ \beta = N_{\gamma,\overline{\beta}}^\alpha$ for all $\alpha,\beta,\gamma \in \Irr$.

\begin{thm}\label{t:ZL1=A(I)} Let $\G$ be a compact quantum group of Kac type. Then $\mc{Z}(\LOQ)$ and $A(\Irr,n)$ are completely isometrically isomorphic as completely contractive Banach algebras. In particular, $(\Irr,n)$ is an operator Fourier hypergroup.
\end{thm}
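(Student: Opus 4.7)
The strategy is to realize both $\mc{Z}(\LOQ)$ and $A(\Irr,n)$ as preduals of the same von Neumann algebra sitting inside $\LIQ$. Let $\mc{C}(\G)$ denote the weak$^*$-closed linear span of the characters $\{\chi^\al:\al\in\Irr\}$ in $\LIQ$. In the Kac case, $\mc{C}(\G)$ is a von Neumann $*$-subalgebra: it is closed under products by the fusion identity $\chi^\al\chi^\be=\sum_\gamma N^\gamma_{\al\be}\chi^\gamma$, and under involution by $(\chi^\al)^*=\chi^{\bar\al}$, the latter using that $h$ is tracial and the $u^\al$ are unitary.

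First, I would construct a normal $*$-isomorphism $\Psi:\VN(\Irr,n)\to\mc{C}(\G)$ defined on generators by $\Psi(\lm_n(\al)):=\chi^\al/n_\al$. The algebraic verification compares the hypergroup relation $\lm_n(\al)\lm_n(\be)=\sum_\gamma \frac{n_\gamma}{n_\al n_\be}N^\gamma_{\al\be}\lm_n(\gamma)$ (encoding the probabilistic convolution $\al\cdot\be=\sum_\gamma \frac{n_\gamma}{n_\al n_\be}N^\gamma_{\al\be}\gamma$) with the plain character fusion rule above; the scaling by $1/n_\al$ is precisely what reconciles the two normalizations. Preservation of $*$ follows from $\lm_n(\al)^*=\lm_n(\bar\al)$, $(\chi^\al)^*=\chi^{\bar\al}$, and $n_\al=n_{\bar\al}$. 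Linear independence of $\{\chi^\al\}$ in $\LIQ$---verified via the Schur orthogonality $h(\chi^\al(\chi^\be)^*)=\delta_{\al\be}$ (Kac)---gives injectivity on the polynomial level. Extending to a normal $*$-isomorphism, $\Psi$ is automatically completely isometric, and hence so is the predual $\Psi_*:\mc{C}(\G)_*\to A(\Irr,n)$.

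Second, I would identify $\mc{C}(\G)_*$ completely isometrically with $\mc{Z}(\LOQ)$. The tool is the adjoint action $\mathrm{Ad}:\LIQ\to\LIQ\oten\LIQ$ and the Haar-averaging conditional expectation $E:=(\mathrm{id}\otimes h)\circ\mathrm{Ad}:\LIQ\to\LIQ$, which in the Kac case is normal and faithful since $\mathrm{Ad}$ preserves the tracial $h$. Using the Peter--Weyl decomposition of $\LIQ$ into blocks $M_{n_\al}$ together with Schur's lemma, the Ad-fixed elements in each block are scalar multiples of $I_{n_\al}$, which under the embedding into $\LIQ$ correspond precisely to multiples of $\chi^\al$; hence $\LIQ^{\mathrm{Ad}}=\mc{C}(\G)$. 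The predual projection $E_*:\LOQ\to\LOQ$ is then a completely contractive idempotent whose image equals $\mc{Z}(\LOQ)$ and is completely isometric to $\mc{C}(\G)_*$. Composing with $\Psi_*$ yields the desired $\mc{Z}(\LOQ)\cong A(\Irr,n)$ as completely contractive Banach algebras, whence $(\Irr,n)$ is an operator Fourier hypergroup.

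I expect the main obstacle to be this final identification: proving rigorously (via Peter--Weyl and Schur) that $\LIQ^{\mathrm{Ad}}=\mc{C}(\G)$, so that the averaging projection picks out exactly $\mc{Z}(\LOQ)$ with the correct operator space norm. This incidentally settles, in the Kac case, the open question stated in the excerpt that $\mc{Z}(\LOQ)=\mathrm{Char}_q(\G)$. The Kac hypothesis is critical throughout: it enforces $d_\al=n_\al$ (so the hypergroups $(\Irr,n)$ and $(\Irr,d)$ coincide with the correct normalization for $\Psi$), makes $h$ tracial (needed for the existence of $E$ and for the character involution identity), and gives $\chi^\al_q=\chi^\al$.
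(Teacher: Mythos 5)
Your overall strategy---identifying $\VN(\Irr,n)$ with the von Neumann subalgebra of $\LIQ$ generated by the characters and then passing to preduals---is the same as the paper's, and most of your first step is sound. But there are two genuine gaps. The more serious one is that the theorem asserts an isomorphism of completely contractive \emph{Banach algebras}, while your argument only delivers a complete isometry of operator spaces: realizing $A(\Irr,n)$ and $\mc{Z}(\LOQ)$ as preduals of the same von Neumann algebra says nothing about the products, which are pointwise multiplication on $A(\Irr,n)$ and convolution on $\mc{Z}(\LOQ)$. You must check that the predual map intertwines them. The paper does this by computing the pre-adjoint explicitly on generators (it sends $\delta_\al$ to $n_\al\vphi^\al$, verified via the duality pairing $\la\delta_\al,\lm_n(\delta_\be)\ra=\delta_{\al,\overline{\be}}n_\al^2$) and then invoking the identity $f\star u^\al_{ij}=\frac{1}{n_\al}\la f,\chi^{\overline{\al}}\ra u^\al_{ij}$ for central $f$ (equation (2.4) of \cite{ac}), which shows that convolution of central elements diagonalizes over $\Irr$ exactly as pointwise multiplication does. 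Without this, the ``as completely contractive Banach algebras'' clause is unproved.

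The second gap is the phrase ``extending to a normal $*$-isomorphism'': a $*$-isomorphism of the dense polynomial $*$-subalgebras need not extend to the weak$^*$ closures, and injectivity on the span of the characters does not give normality. The fix is exactly what the paper does: exhibit the unitary $U_n:\mc{Z}(\LTQ)\ni\Lphi(\chi^\al)\mapsto\frac{1}{n_\al}\delta_\al\in\ell^2(\Irr,n^2)$ and check $U_n^*\lm_n(\delta_\al)U_n=n_\al\chi^\al$, so that the isomorphism is spatial (equivalently, trace-preserving, and hence extends by the GNS construction). Your Schur-orthogonality observation is precisely what makes $U_n$ well defined and unitary, so this is fillable, but it must be said. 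Finally, your route to $\mc{C}(\G)_*\cong\mc{Z}(\LOQ)$ via the adjoint-action expectation $E=(\mathrm{id}\otimes\vphi)\circ\mathrm{Ad}$ and the fixed-point identity $\LIQ^{\mathrm{Ad}}=\mc{C}(\G)$ is genuinely different from, and heavier than, the paper's, which simply uses that the Haar state restricts to a normal faithful trace on the character algebra so that its predual is carried onto $\mc{Z}(\LOQ)$ inside $\LOQ$; you correctly flag this identification as the delicate point, and it is where the bulk of your remaining work would lie.
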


\begin{proof} Recall from \cite{ac} that $Z\LIQ$ is the von Neumann subalgebra of $\LIQ$ generated by the characters. Clearly $Z\LIQ(Z\LTQ)\subseteq Z\LTQ$ and $Z\LIQ\rightarrow B(Z\LTQ)$ is a faithful representation. By definition of the hypergroup convolution we have
$$\lm_n(\delta_\al)\delta_\be=\sum_{\gamma\in\Irr}N_{\overline{\al}\gamma}^\be\frac{n_\al n_\be}{n_\gamma}\delta_\gamma.$$
The unitary $U_n:Z(\LTQ)\ni\Lphi(\chi^\al)\mapsto\frac{1}{n_\al}\delta_\al\in\ell^2(I,n^2)$
then satisfies
\begin{align*}U_n^*\lm_n(\delta_\al)U_n\Lphi(\chi^\be)&=\sum_{\gamma\in\Irr}N_{\overline{\al}\gamma}^\be\frac{n_\al}{n_\gamma}U^*_n(\delta_\gamma)\\
&=\sum_{\gamma\in\Irr}N_{\overline{\al}\gamma}^\be n_\al\Lphi(\chi^\gamma)\\
&=\sum_{\gamma\in\Irr}N_{\al\be}^\gamma n_\al\Lphi(\chi^\gamma)\\
&=n_\al\Lphi(\chi^\al\chi^\be)\\
&=n_\al\chi^\al\Lphi(\chi^\be)\\
\end{align*}
for all $\al,\be\in\Irr$. It follows that $U_n^*\lm_n(\delta_\al)U_n=n_\al\chi^\al$, and $Z\LIQ$ is spatially *-isomorphic to $\VN(\Irr,n)$. Thus, $Z(\LIQ)_*\cong A(\Irr,n)$, completely isometrically.

Since the restriction of the Haar state $\vphi|_{Z\LIQ}$ is a normal faithful trace, we have $Z\LIQ_*\cong \mc{Z}(\LOQ)$. Under this identification, the pre-adjoint of the spatial *-isomorphism satisfies $U_n^*\cdot\delta_\al\cdot U_n=n_\al\vphi^\al$, $\al\in\Irr$, as is easily verified by invoking the duality relation
$$\la\delta_\al,\lm_n(\delta_\be)\ra_{A(\Irr,n),VN(\Irr,n)}=\delta_{\al,\overline{\be}}n_\al^2, \ \ \ \al,\be\in\Irr.$$
Equation (2.4) in \cite{ac} then implies that $U_n^*(\cdot)U_n:A(\Irr,n)\rightarrow\mc{Z}(\LOQ)$ is multiplicative. Therefore, $(\Irr,n)$ is an operator Fourier hypergroup.
\end{proof}

\begin{rem} The above isomorphism $\mc{Z}(\LOQ)\cong A(\Irr,n)$ identifies $f\in \mc{Z}(\LOQ)$ with $\hat{f}\in A(\Irr,n)$ given by
\begin{equation}\label{e:hat}\hat{f}(\al)=\frac{1}{n_\al}\la f,\chi^{\overline{\al}}\ra, \ \ \ \al\in\Irr.\end{equation}
\end{rem}

For general compact quantum groups, there is an injective contraction from $A(\Irr,d)$ into the center $\mc{Z}(\LOQ)$. To show this, first observe by the orthogonality relations that for $f\in\mathrm{span}\{\vphi^\al_q\mid\al\in\Irr\}$

\begin{equation}\label{e:Zform1}f\star u_{ij}^\al=\frac{1}{d_\al}\la f,\chi^{\overline{\al}}_q\ra u_{ij}^\al\end{equation}
for all $\al\in\Irr$, $1\leq i,j\leq n_\al$. This yields the equality
\begin{equation}\label{e:Zform2} f=\sum_{\al\in\Irr}d_\al f\star\vphi_q^\al=\sum_{\al\in\Irr}\la f,\chi^{\overline{\al}}_q\ra\vphi_q^\al.\end{equation}

\begin{prop} Let $\G$ be a compact quantum group. Then there exists a contractive injection $T:A(\Irr,d)\rightarrow\mc{Z}(\LOQ)$.
\end{prop}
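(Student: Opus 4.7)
The natural candidate for $T$, extrapolated from the formula implicit in Theorem~\ref{t:ZL1=A(I)} with the quantum dimension $d$ replacing the regular dimension $n$, is
\[
T(u) := \sum_{\al \in \Irr} d_\al\, u(\al)\, \vphi^\al_q
\]
defined initially on the dense subspace of finitely supported $u\in c_{00}(\Irr)\cap A(\Irr,d)$; by (\ref{e:Zform2}), any such $T(u)$ lies in $\mathrm{Char}_q(\G)\subseteq\mc{Z}(\LOQ)$. The substantive content of the proposition is therefore contractivity, with injectivity falling out once contractivity lets us extend $T$ to all of $A(\Irr,d)$.

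The workhorse is a non-Kac analogue of the spatial isomorphism constructed in Theorem~\ref{t:ZL1=A(I)}. The Peter--Weyl orthogonality relations give
\[
\la \Lphi(\chi^\al_q), \Lphi(\chi^\be_q)\ra_{\LTQ} = \vphi(\chi^{\overline{\be}}_q\chi^\al_q) = N^{\mathbf{1}}_{\overline{\be},\al} = \delta_{\al,\be},
\]
so the assignment $U\Lphi(\chi^\al_q):=d_\al^{-1}\delta_\al$ extends to a unitary $U:\mc{K}\to L^2(\Irr,d)$, where $\mc{K}\subseteq\LTQ$ is the closed linear span of $\{\Lphi(\chi^\al_q)\mid\al\in\Irr\}$. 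Using $\chi^\al_q\chi^\be_q=\sum_\gamma N^\gamma_{\al\be}\chi^\gamma_q$ together with Frobenius reciprocity and the explicit form of the left regular representation $\lambda_d$ on the hypergroup $(\Irr,d)$ (whose Haar weight at $\al$ is $d_\al^2$), a direct computation parallel to the one in Theorem~\ref{t:ZL1=A(I)} will yield the intertwining identity
\[
U\,M(\chi^\al_q)|_{\mc{K}}\,U^* \;=\; d_\al\,\lambda_d(\delta_\al), \qquad \al\in\Irr,
\]
where $M$ denotes left multiplication on $\LTQ$. Since the fusion coefficients are symmetric, the $*$-subalgebra generated by the $\chi^\al_q$ is commutative, and its $\sigma$-weak closure $\mathcal{B}\subseteq\LIQ$ is therefore a commutative von Neumann subalgebra spatially $*$-isomorphic to $\VN(\Irr,d)$ via conjugation by $U$.

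To finish, I plan to invoke Takesaki's theorem: each quantum character is invariant under the modular automorphism group of $\vphi$ (a standard feature of the quantum-dimension normalization), so $\mathcal{B}$ lies in the centralizer of $\vphi$ and admits a faithful normal $\vphi$-preserving conditional expectation $E:\LIQ\to\mathcal{B}$. Pre-adjointing the spatial $*$-isomorphism $\mathcal{B}\cong\VN(\Irr,d)$ and composing with the injective contraction $E_*:\mathcal{B}_*\hookrightarrow\LOQ$, $\om\mapsto\om\circ E$, produces a contractive injection $T':A(\Irr,d)\to\LOQ$. A direct check on finite supports, using the identity $\vphi^\al_q = \vphi^\al_q|_{\mathcal{B}}\circ E$ (valid because $\chi^\al_q\in\mathcal{B}$ and $E$ is $\mathcal{B}$-bimodular), shows $T'=T$; the image lies in $\mc{Z}(\LOQ)$ because it does on the dense subspace of finitely supported functions and $\mc{Z}(\LOQ)$ is norm-closed.

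The main obstacle I anticipate is ensuring that $\mathcal{B}$ is contained in the centralizer of the (non-tracial) Haar state $\vphi$, which is the step with no analogue in the Kac setting. One must exploit the particular form of the quantum characters --- essentially that the positive Woronowicz operator $F_\al$ has been absorbed into the definition to render $\chi^\al_q$ fixed by $\sigma^\vphi_t$ --- rather than appealing to traciality. A secondary bookkeeping issue is keeping track of the hypergroup involution $\al\mapsto\overline{\al}$ when verifying the intertwining identity, which is why conjugate representations appear explicitly in the inner product above.
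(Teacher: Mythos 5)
There is a genuine gap, and it sits exactly where you flagged it. The quantum characters are \emph{not} fixed by the modular automorphism group of the Haar state unless $\G$ is of Kac type: writing $\chi^\al_q=(\Tr\ten\id)((F_\al\ten 1)u^\al)$ and using $\sigma_t^\vphi(u^\al)=F_\al^{it}u^\al F_\al^{it}$, one gets $\sigma_t^\vphi(\chi^\al_q)=(\Tr\ten\id)((F_\al^{1+2it}\ten 1)u^\al)$, an $F^{1+2it}$-twisted character which lies in the coefficient space of $\al$ but not in $\C\chi^\al_q$ (the normalization of $\chi^\al_q$ buys you the KMS identity at $t=i$, namely $\sigma_i(\chi^\be_q)^*=\chi^{\overline{\be}}_q$, which is what the paper's first computation uses --- it does not buy invariance for real $t$). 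Consequently the span of the quantum characters is not globally $\sigma_t^\vphi$-invariant, Takesaki's theorem does not apply, and the $\vphi$-preserving conditional expectation $E:\LIQ\to\mathcal{B}$ you want does not exist outside the Kac case; the entire second half of your argument, and hence contractivity, collapses. Two subsidiary problems compound this: the set $\{\chi^\al_q\}$ is not self-adjoint \emph{in} $\LIQ$ (the identity $\chi_q^{\al^*}=\chi_q^{\overline{\al}}$ holds only for the restrictions of these operators to the invariant subspace $\mc{Z}(\LTQ)$, which is where the paper is careful to build its von Neumann algebra and spatial isomorphism with $\VN(\Irr,d)$), so ``the $\sigma$-weak closure of the $*$-subalgebra generated by the $\chi^\al_q$'' is not the commutative subalgebra of $\LIQ$ you describe; and the fusion coefficients of a general compact quantum group are not symmetric (take the dual of a non-abelian discrete group), so commutativity cannot be assumed here.

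The first half of your proposal --- the formula $T(u)=\sum_\al u(\al)d_\al\vphi^\al_q$, the unitary $U$, and the intertwining of $d_\al\chi^\al_q$ with $\lm_d(\delta_\al)$ --- does match the paper. Where the paper diverges is in how it extracts contractivity without any conditional expectation: it first identifies $\mathrm{Char}_q(\G)$ with the closure of $\{\om_{\xi,\eta}|_{\LIQ}\mid\xi,\eta\in\mc{Z}(\LTQ)\}$ (this is the role of the $\sigma_{-i}$ computation), then shows that $\om_{\xi,\eta}|_{\LIQ}=T(u)$ for $\xi,\eta\in\mc{Z}(\LTQ)$ precisely when $\om_{U_d\xi,U_d\eta}|_{\VN(\Irr,d)}=u$, and finally uses the standard form of $\LOQ$ on $\LTQ$: $\norm{T(u)}_{\LOQ}$ is at most the infimum of $\norm{\xi}\norm{\eta}$ over \emph{all} representing vectors in $\LTQ$, hence at most the infimum over the smaller class of representing vectors in $\mc{Z}(\LTQ)$, which equals $\norm{u}_{A(\Irr,d)}$ by the unitary correspondence. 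If you want to salvage your write-up, replace the Takesaki step with this restriction-of-the-infimum argument.
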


\begin{proof} First, let $\al,\be\in\Irr$, and consider the functional $\om_{\Lphi(\chi^\al_q),\Lphi(\chi^\be_q)}|_{\LIQ}$. For $x\in\mc{A}$, we have
\begin{align*}\om_{\Lphi(\chi^\al_q),\Lphi(\chi^\be_q)}(x)&=\vphi((\chi^\be_q)^*x\chi^\al_q)=\vphi(x\chi^\al_q\sigma_{-i}((\chi^\be_q)^*))\\
&=\vphi(x\chi^\al_q\sigma_{i}(\chi^\be_q)^*)=\vphi(x\chi^\al_q\chi^{\overline{\be}}_q)\\
&=\sum_{\gamma\in\Irr}N^\gamma_{\al\overline{\be}}\vphi(x\chi^\gamma_q)\\
&=\sum_{\gamma\in\Irr}N^\gamma_{\al\overline{\be}}\vphi^\gamma_q(x).\end{align*}
By weak* density of $\mc{A}$ in $\LIQ$, it follows that $\om_{\Lphi(\chi^\al_q),\Lphi(\chi^\be_q)}|_{\LIQ}\in\mathrm{Char}_q(\G)$. Thus, by linearity and density, we have the following inclusion
$$\{\om_{\xi,\eta}|_{\LIQ}\mid\xi,\eta\in\mc{Z}(\LTQ)\}\subseteq\mathrm{Char}_q(\G).$$
Since the left hand side clearly contains $\mathrm{span}\{\vphi^\al_q\mid\al\in\Irr\}$, we obtain
$$\mathrm{Char}_q(\G)=\overline{\{\om_{\xi,\eta}|_{\LIQ}\mid\xi,\eta\in\mc{Z}(\LTQ)\}}.$$

Clearly, $\chi^\al_q(\mc{Z}(\LTQ))\subseteq\mc{Z}(\LTQ)$ for every
$\al\in\Irr$. Moreover, for any $\be,\gamma\in\Irr$, we have
\begin{align*}\la\chi_q^{\al^*}\Lphi(\chi^\be_q),\Lphi(\chi^\gamma_q)\ra&=\la\Lphi(\chi^\be_q),\Lphi(\chi^\al_q\chi^\gamma_q)\ra\\
&=\sum_{\delta\in\Irr}N^{\delta}_{\al\gamma}\la\Lphi(\chi^\be_q)\Lphi(\chi^{\delta}_q)\ra\\
&= \ N^{\beta}_{\al\gamma}= \ N^{\gamma}_{\overline{\al}\beta}\\
&=\sum_{\gamma\in\Irr}N^{\delta}_{\overline{\al}\beta}\la\Lphi(\chi^\delta_q)\Lphi(\chi^{\gamma}_q)\ra\\
&=\la\chi^{\overline{\al}}_q\Lphi(\chi_q^\be),\Lphi(\chi^{\gamma}_q)\ra.
\end{align*}
Hence, in $\mc{B}(\mc{Z}(\LTQ))$, we have $\chi_q^{\al^*}=\chi_q^{\overline{\al}}$, so that $\mc{B}_q:=\text{span}\{\chi^\al_q\mid\al\in\Irr\}$ is a self-adjoint unital subalgebra of $\mc{B}(\mc{Z}(\LTQ))$. Let $\mc{Z}\LIQ$ denote the von Neumann algebra generated by $\mc{B}_q$.

By definition of the hypergroup convolution and Frobenius reciprocity, it follows that the unitary
$$U_d:\mc{Z}(\LTQ)\ni\Lphi(\chi^\al_q)\mapsto\frac{1}{d_\al}\delta_\al\in\ell^2(I,d^2)$$
intertwines $\lm_d(\delta_\al)$ and $d_\al\chi^\al_q$, and yields a spatial *-isomorphism $\mc{Z}\LIQ\cong VN(\Irr,d)$.

Given a finitely supported $u\in A(\Irr,d)$, define $T(u)=\sum_{\al\in\Irr}u(\al)d_\al\vphi_q^\al$. If $\xi,\eta\in\mc{Z}(\LTQ)$ such that $\om_{\xi,\eta}|_{\LIQ}=T(u)$, then
\begin{align*}\om_{U_d\xi,U_d\eta}(\lm(\delta_\al))&=d_\al\la\chi^\al_q\xi,\eta\ra\\
&=\sum_{\be\in\Irr}u(\be)d_\be\la\vphi_q^\be,\chi^\al_q\ra\\
&=\sum_{\be\in\Irr}u(\be)\la\delta_\beta,\lm(\delta_\al)\ra\\
&=\la u,\lm_d(\delta_\al)\ra.\end{align*}
Thus, $\om_{U_d\xi,U_d\eta}|_{VN(\Irr,d)}=u$. Conversely, if $\om_{U_d\xi,U_d\eta}|_{VN(\Irr,d)}=u$, then since
$$u(\be)=\frac{1}{d_\be^2}\la u,\lm_d(\delta_{\overline{\be}})\ra_{A(\Irr,d),VN(\Irr,d)}, \ \ \ \be\in\Irr,$$
equation (\ref{e:Zform2}) implies $\om_{\xi,\eta}|_{\LIQ}=T(u)$. Putting things together, and recalling that $\LOQ$ is standardly represented on $\LTQ$, we have
\begin{align*}\norm{T(u)}_{\LOQ}&\leq\inf\{\norm{\xi}\norm{\eta}\mid\xi,\eta\in\LTQ, \ \om_{\xi,\eta}|_{\LIQ}=T(u)\}\\
&\leq\inf\{\norm{\xi}\norm{\eta}\mid\xi,\eta\in\mc{Z}(\LTQ), \ \om_{\xi,\eta}|_{\LIQ}=T(u)\}\\
&=\inf\{\norm{U_d\xi}\norm{U_d\eta}\mid\xi,\eta\in\mc{Z}(\LTQ), \ \om_{U_d\xi,U_d\eta}=u\}\\
&=\norm{u}_{A(\Irr,d)}.
\end{align*}
Hence, $T$ extends to a linear contraction $T:A(\Irr,d)\rightarrow\mc{Z}(\LOQ)$ such that $T(\delta_\al)=d_\al\vphi^\al_q$. Thus, $T$ is multiplicative on finitely supported functions by equation (2.4) of \cite{ac}, and it follows that $T(u)=\om_{\xi,\eta}|_{\LIQ}$ whenever $u=\om_{U_d\xi,U_d\eta}|_{VN(\Irr,d)}$, $\xi,\eta\in\mc{Z}(\LTQ)$. If $T(u)=0$, and $u=\om_{U_d\xi,U_d\eta}|_{VN(\Irr,d)}$ for some $\xi,\eta\in\mc{Z}(\LTQ)$, then $\om_{\xi,\eta}(\chi^\al_q)=0$ for all $\al\in\Irr$, and $u=0$.
\end{proof}

For general compact $\G$, it is known that $\G$ is co-amenable (i.e., $\LOQ$ has a bounded approximate identity) if and only if $(\Irr,n)$ satisfies $(P_2)$ \cite[Theorem 2.1.7]{nesh}. If, in addition, $\G$ of Kac type, then weaker approximation properties of $\LOQ$, such as weak amenability or the approximation property, entail the corresponding approximation property of its center $\mc{Z}(\LOQ)$ \cite{ruan}, and therefore, of the hypergroup $(\Irr,n)$. A natural question is whether weaker approximation properties of the hypergroups $(\Irr,n)$ and $(\Irr,d)$ entail the corresponding properties for $\LOQ$. For instance, Theorem~\ref{t:commutative-WA} implies that $(\Irr,n)$ is weakly amenable for any compact Kac algebra $\G$ with commutative fusion rules. By Theorem~\ref{t:ZL1=A(I)}, $\mc{Z}(\LOQ)$ has a multiplier bounded approximate identity. Does $\LOQ$ then necessarily have a multiplier bounded approximate identity?

Another question of interest is to study the character $\chi_0$ (in Subsection~\ref{ss:ai-A(H)}) and the corresponding hypergroups $(\Irr_0,n)$ and $(\Irr_0,d)$ (which satisfy $(P_2)$) from Voit's construction for arbitrary compact $\G$ with commutative fusion rules.

\end{section}

 \section*{Acknowledgement}

The first named author would like to express his deep gratitude to Yemon Choi, Ebrahim Samei, and Nico Spronk for several constructive discussions on the topic of Section~\ref{s:AM-A(H)}. This project was initiated at the Fields Institute during the Thematic Program on Abstract Harmonic Analysis, Banach and Operator Algebras in 2014. Parts of this research were carried out while the first author was visiting University of Waterloo and Carleton University.
We are grateful  for these kind hospitalities.

\vskip1.0em

%%%%%%%%%%%%%%%%%%%%%%%%%%%%%%%%%%%%%%%%%%%%%%%%%%%%%%%%%%%%%%%%%%%%%%
\footnotesize
\def\cprime{$'$} \def\cprime{$'$} \def\cprime{$'$}

%%%%%%%%%
%%%%%%%%%%%%%%%%%%%%%%%%%%%%%%%%%%%%%%%%%%%%%%%%%%%%%%%%%%%%%%%%%%%%%%%%%%%%%%%
\vskip1.5em

\noindent {\bf Addresses:}

\vskip0.5em

\noindent{\bf Mahmood Alaghmandan}

\noindent Department of Mathematical Sciences, Chalmers University of Technology and University of Gothenburg, Gothen-
burg SE-412 96, Sweden

\noindent   E-mail address:    mahala@chalmers.se

 \vskip0.5em

  \noindent {\bf  Jason Crann}

 \noindent Institute for Quantum Computing, University of Waterloo, Waterloo, ON, Canada N2L 3G1

  \noindent Department of Pure Mathematics, University of Waterloo, Waterloo, ON, Canada N2L 3G1

   \noindent Department of Mathematics and Statistics, University of Guelph, Guelph, ON, Canada N1G 2W1

\noindent      E-mail address: jcrann@uwaterloo.ca

\end{document}